\numberwithin{equation}{section}
\newtheorem{theorem}{Theorem}[section]
\newtheorem{proposition}[theorem]{Proposition}
\newtheorem{lemma}[theorem]{Lemma}
\newtheorem{corollary}[theorem]{Corollary}
\theoremstyle{definition}
\newtheorem{definition}[theorem]{Definition}
\newtheorem{example}[theorem]{Example}
\newtheorem{question}[theorem]{Question}
\newtheorem{assumption}[theorem]{Assumption}
\theoremstyle{remark}
\newcommand{\Z}{\mathbb{Z}}
\newcommand{\Q}{\mathbb{Q}}
\newcommand{\R}{\mathbb{R}}
\newcommand{\C}{\mathbb{C}}
\newcommand{\T}{\mathcal{T}}
\newcommand{\Ker}{\operatorname{Ker}}
\newcommand{\Coker}{\operatorname{Coker}}
\newcommand{\xMapsto}[2][]{\ext@arrow 0599{\Mapstofill@}{#1}{#2}}
\def\Mapstofill@{\arrowfill@{\Mapstochar\Relbar}\Relbar\Rightarrow}
\title{The Stiefel-Whitney classes of moment-angle manifolds are trivial}
\author{Sho Hasui}
\address{Department of Mathematical Sciences, Osaka Prefecture University, Sakai, 599-8531, Japan}
\email{s.hasui@ms.osakafu-u.ac.jp}
\author{Daisuke Kishimoto}
\address{Faculty of Mathematics, Kyushu University, Fukuoka, 819-0395, Japan}
\email{kishimoto@math.kyushu-u.ac.jp}
\author{Akatsuki Kizu}
\address{Department of Mathematics, Kyoto University, Kyoto, 606-8502, Japan}
\email{kizu.akatsuki.85e@st.kyoto-u.ac.jp}
\subjclass[2010]{57N65, 55N91}
\keywords{moment-angle manifold, Stiefel-Whitney class, equivariant Stiefel-Whitney class, cobordism}
\begin{document}

  \maketitle

  \begin{abstract}
    We prove that the Stiefel-Whitney classes of a moment-angle manifold, not necessarily smooth, are trivial. We also consider Stiefel-Whitney classes of the partial quotient of a moment-angle manifold.
  \end{abstract}

  \baselineskip.525cm


  \parskip .05in
  \parindent .0pt
  \baselineskip.525cm


  \section{Introduction}\label{Introduction}

  Davis and Januszkiewicz \cite{DJ} introduced a space which is now called a \emph{moment-angle complex} as a topological generalization of the homogeneous coordinate for a toric variety associated to a simplicial fan \cite{Co}. We recall its definition. Let $K$ be a simplicial complex with vertex set $\{1,2,\ldots,m\}$. The moment-angle complex for $K$ is defined by
  \begin{equation}
    \label{definition Z_K}
    Z_K=\bigcup_{\sigma\in K}Z_\sigma
  \end{equation}
  where $Z_\sigma=X_1\times\cdots\times X_m$ such that $X_i=D^2$ for $i\in\sigma$ and $X_i=S^1$ for $i\not\in\sigma$. Note that the definition of $Z_K$ in the above form is due to Buchstaber and Panov \cite{BP}. The moment-angle complex $Z_K$ has been a central object of study in toric topology. In particular, when $Z_K$ is a topological manifold, not necessarily smooth, it has been studied in many contexts \cite{BLdMV,CLdM,GLdM,L,MP,PUV,T}. Some conditions on $K$ which guarantee that $Z_K$ is a topological manifold are known. Buchstaber and Panov \cite{BP} proved that $Z_K$ is a topological manifold whenever $K$ is a simplicial sphere, and later, Cai \cite{C} generalized this result to a generalized homology sphere which is a homology manifold having the same homology as a sphere.

  Suppose that $Z_K$ is a topological manifold. By definition, a torus $T^m=(S^1)^m$ acts naturally on $Z_K$, which restricts to a free action of the diagonal subgroup $S^1\cong\Delta\subset T^m$. Then $Z_K$ is the boundary of a topological manifold $Z_K\times_\Delta D^2$. So if $Z_K$ is smooth, then it is null-cobordant, or equivalently, all of its Stiefel-Whitney numbers are trivial. Hence it is natural to ask whether or not the Stiefel-Whitney classes of $Z_K$ themselves are trivial whenever $Z_K$ is a smooth manifold.

  On the other hand, for a connected topological manifold $M$, not necessarily smooth, the Wu classes $\nu_i\in H^i(M;\Z/2)$ can be defined, so that the $i$-the Stiefel-Whitney class of $M$ can be defined by
  \[
    w_i(M)=\sum_{j=0}^i\mathrm{Sq}^j\nu_{i-j}
  \]
  too. Then the Stiefel-Whitney classes of any moment-angle manifold, not necessarily smooth, can be defined. So we can generalize the above question as:

  \begin{question}
    \label{question 1}
    Are the Stiefel-Whitney classes of a moment-angle manifold, not necessarily smooth, trivial?
  \end{question}

  The aim of this paper is to answer the above question.

  \begin{theorem}
    \label{main}
    The Stiefel-Whitney classes of every moment-angle manifold, not necessarily smooth, are trivial.
  \end{theorem}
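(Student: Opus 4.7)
My plan is to reduce the theorem to showing that the Wu classes of $Z_K$ vanish in positive degree, and then to carry out this vanishing using the Buchstaber--Panov description of $H^*(Z_K;\Z/2)$. Since $\nu_0 = 1$ and $w_i(Z_K) = \sum_{j=0}^{i}\mathrm{Sq}^{i-j}\,\nu_j(Z_K)$, it suffices to prove $\nu_i(Z_K) = 0$ for every $i > 0$. By the defining property of Wu classes together with Poincar\'e duality, this is equivalent to showing that
\[
  \mathrm{Sq}^i \colon H^{n-i}(Z_K;\Z/2) \longrightarrow H^n(Z_K;\Z/2) \cong \Z/2
\]
is the zero map for every $i > 0$, where $n = \dim Z_K$.

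To analyze this map, I would invoke the Koszul-type DGA model of Buchstaber--Panov, which identifies $H^*(Z_K;\Z/2)$ with the cohomology of $(\Lambda[u_1,\ldots,u_m] \otimes \mathbb{F}_2[K],\,d)$ where $\deg u_i = 1$, $\deg v_i = 2$, and $du_i = v_i$, together with the induced Hochster decomposition
\[
  H^j(Z_K;\Z/2) \;\cong\; \bigoplus_{J \subseteq \{1,\ldots,m\}} \widetilde{H}^{j - |J| - 1}(K_J;\Z/2).
\]
When $K$ is a generalized $(d-1)$-sphere, so that $\dim Z_K = m+d$, the generator of $H^n(Z_K;\Z/2)$ is concentrated in the single summand $J = \{1,\ldots,m\}$, namely $\widetilde{H}^{d-1}(K;\Z/2) \cong \Z/2$. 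I would then track $\mathrm{Sq}^i$ across this decomposition, using the standard rules $\mathrm{Sq}(v_i) = v_i + v_i^2$ and the Cartan formula, with the aim of proving that the image of $\mathrm{Sq}^i$ avoids the top summand for every $i > 0$.

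The main obstacle will be making this Steenrod-square computation precise at the cochain level: the action of $\mathrm{Sq}^j$ on the exterior generators $u_i$ is not simply read off from the model (since $u_i^2 = 0$ in $\Lambda$), and the interplay between the differential $du_i = v_i$ and the Steenrod operations must be handled carefully. A possibly cleaner alternative, in line with the equivariant Stiefel-Whitney classes mentioned in the paper's keywords, is to lift the problem to the Borel construction $ET^m \times_{T^m} Z_K \simeq (\C P^\infty)^K$, whose cohomology is the Stanley--Reisner ring $\mathbb{F}_2[K]$ with the transparent action $\mathrm{Sq}(v_i) = v_i + v_i^2$, and then to deduce the vanishing of the ordinary Wu classes of $Z_K$ from their equivariant counterparts via the Serre spectral sequence of the fibration $Z_K \to ET^m \times_{T^m} Z_K \to BT^m$.
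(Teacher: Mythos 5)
Your second, ``cleaner alternative'' is indeed the strategy the paper follows, but as sketched it is missing the one ingredient that makes the argument work. Knowing that $H^*(DJ_K;\Z/2)$ is the Stanley--Reisner ring with a transparent Steenrod action is not what forces the vanishing; the natural equivariant characteristic classes living there are typically nonzero. The actual mechanism is that the restriction $j^*\colon H^*(DJ_K;\Z/2)\to H^*(Z_K;\Z/2)$ is the zero map in positive degrees (because $H^*(BT^m;\Z/2)\to H^*(DJ_K;\Z/2)$ is surjective by Davis--Januszkiewicz and the composite $Z_K\to DJ_K\to BT^m$ factors through a point), so the theorem reduces to knowing that $w_i(Z_K)$ lies in the image of $j^*$. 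Producing classes upstairs that restrict to $w_i(Z_K)$ is the entire technical content of the paper: since $Z_K$ need not be smooth, the authors work with Fadell's tangent fiber space $(\T,\T_0)\to M$, construct an equivariant Thom class and an equivariant Thom isomorphism for the $T^m$-action (using that $BT^m$ is simply connected, so the relevant local coefficients are trivial), define $w_i^{T^m}(Z_K)=\phi_{T^m}^{-1}(\mathrm{Sq}^i\phi_{T^m}(1))$, and verify $j^*(w_i^{T^m}(Z_K))=w_i(Z_K)$. Your proposal does not say how equivariant Wu or Stiefel--Whitney classes of a merely topological manifold would be defined, nor why the ordinary ones would be pulled back from the Borel construction; an equivariant Wu-class approach would in addition require equivariant Poincar\'e duality, which is no easier than the Thom-class route. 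Without such a lifting statement, the Serre spectral sequence of $Z_K\to DJ_K\to BT^m$ tells you nothing about $w_i(Z_K)$.

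Your first route has the gap you partly acknowledge, and it is fatal rather than merely technical: the Steenrod squares on $H^*(Z_K;\Z/2)$ are not determined by the multiplicative structure of the Koszul model, and they need not respect the Hochster multigrading, so there is no a priori reason the image of $\mathrm{Sq}^i$ on $H^{n-i}$ avoids the top summand $\widetilde{H}^{\dim K}(K;\Z/2)$. The reduction from Stiefel--Whitney classes to Wu classes is fine, and the vanishing of $\mathrm{Sq}^i$ into the top degree is true, but it is exactly equivalent to the theorem itself; the paper proves it by the equivariant detour above rather than by any cochain-level computation in the Koszul complex.
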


  To prove Theorem \ref{main}, we will construct in Section \ref{Equivariant Stiefel-Whitney class} equivariant Stiefel-Whitney classes for a topological manifold, not necessarily smooth. We will see in Section \ref{Proof of Theorem and its extension} that Theorem \ref{main} can be extended to a real moment-angle manifold, and in Section \ref{Quotient manifold}, we will also consider the triviality of the Stiefel-Whitney numbers of the quotient manifold $Z_K/T$, where $T$ is a subtorus of $T^m$ acting freely on $Z_K$.

  \subsection*{Acknowledgement}

  The authors are grateful to the referee for useful advice and comments. The first author was supported by JSPS KAKENHI Grant Number JP18K13414, and the second author was supported by JSPS KAKENHI Grant Number JP17K05248 and JP19K03473.


  \section{Equivariant Stiefel-Whitney class}\label{Equivariant Stiefel-Whitney class}

  This section recalls the definition of the Stiefel-Whitney classes of a topological manifold due to Fadell \cite{F}, and generalizes them to an equivariant context. Throughout this section, let $M$ denote an $R$-oriented connected $n$-dimensional topological manifold, where $R$ is a commutative ring.

  Let $\T$ be a subspace of the space of paths in $M$ consisting of constant paths and paths $\ell\colon[0,1]\to M$ such that $\ell(t)=\ell(0)$ implies $t=0$. Let $\T_0$ be the subspace of $\T$ consisting of non-constant paths. Then as in \cite[Proposition 3.8]{F}, the evaluation map
  \[
    p\colon\T\to M,\quad\ell\mapsto\ell(0)
  \]
  yields a locally trivial fibration pair
  \[
    (\T,\T_0)\to M
  \]
  with fiber homotopy equivalent to $(\R^n,\R^n-0)$. This is called the \emph{tangent fiber space}. As in \cite[Proposition 3.12]{F}, this fibration pair is equivalent to the fibration pair
  \begin{equation}
    \label{micro tangent bundle}
    (M,M-x)\to(M\times M,M\times M-\Delta)\to M
  \end{equation}
  induced from the first projection $M\times M\to M$, where $\Delta\subset M\times M$ is the diagonal set. Since \eqref{micro tangent bundle} is essentially the micro tangent bundle of $M$ in the sense of Milnor, the tangent fiber space is essentially equivalent to the micro tangent bundle of $M$.

  Clearly, $H^n(p^{-1}(U);R)$ for open sets $U$ of $M$ define a sheaf on $M$, implying we can define an orientation of $M$ over $R$ in the obvious way. So if $M$ is simply-connected or $R=\Z/2$, $M$ is orientable over $R$.

  We have the following Thom isomorphism \cite[Theorem 5.2]{F}.

  \begin{theorem}
    \label{Thom isomorphism}
    If $M$ is orientable over $R$ in the sense above, there is a cohomology class $\Phi\in H^n(\T,\T_0;R)$ such that the map
    \[
      \phi\colon H^*(M;R)\to H^{*+n}(\T,\T_0;R),\quad x\mapsto p^*(x)\smile\Phi
    \]
    is an isomorphism.
  \end{theorem}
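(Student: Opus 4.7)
The plan is to follow the classical route: construct a Thom class $\Phi$ from the orientation data, and then deduce that cup product with $\Phi$ is the Thom isomorphism via a Leray--Hirsch / spectral sequence argument for the fibration pair $(\T,\T_0)\to M$.

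The key tool is the Leray--Serre spectral sequence of the locally trivial fibration pair $(\T,\T_0)\to M$. Its $E_2$-page is $E_2^{p,q}=H^p(M;\mathcal{L}^q)$, where $\mathcal{L}^q$ is the local coefficient system $U\mapsto H^q(p^{-1}(U),p^{-1}(U)\cap\T_0;R)$ of fiberwise relative cohomology. Since the fiber pair is homotopy equivalent to $(\R^n,\R^n-0)$, whose relative cohomology equals $R$ in degree $n$ and vanishes elsewhere, the $E_2$-page is concentrated in the single row $q=n$, and the coefficient system $\mathcal{L}^n$ is precisely the orientation sheaf described just before the statement. The orientation hypothesis trivializes $\mathcal{L}^n\cong R$, so the spectral sequence collapses and gives additive isomorphisms $H^p(M;R)\cong H^{p+n}(\T,\T_0;R)$ for all $p$.

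Next I would realize this isomorphism as cup product with one distinguished class. Under the collapse the edge homomorphism $H^n(\T,\T_0;R)\to H^0(M;\mathcal{L}^n)\cong R$ is an isomorphism, and I define $\Phi$ to be the preimage of the generator determined by the orientation. By construction $\Phi$ restricts on every fiber $p^{-1}(x)$ to the chosen generator of $H^n(p^{-1}(x),p^{-1}(x)\cap\T_0;R)$. This fiberwise-basis property is exactly the hypothesis of Leray--Hirsch for the fibration pair, so the $H^*(M;R)$-linear map $\phi(x)=p^*(x)\smile\Phi$ is an isomorphism. Alternatively one can argue by a Mayer--Vietoris induction on a trivializing cover of $M$: on each piece the statement reduces to the Thom isomorphism for the trivial fibration $U\times(\R^n,\R^n-0)\to U$, which is just the Künneth formula.

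The main technical obstacle will be setting up the Leray--Serre machinery in the topological (non-smooth) category and identifying $\mathcal{L}^n$ with the orientation sheaf defined in the paragraph preceding the theorem; once these identifications are in place the rest is essentially formal. A minor convenience is to carry out the argument using the equivalent micro tangent bundle model $(M\times M,M\times M-\Delta)\to M$ from \eqref{micro tangent bundle}, since that projection is a genuine product projection and its associated spectral sequence is standard.
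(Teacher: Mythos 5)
The paper offers no proof of this theorem: it is imported verbatim from Fadell \cite[Theorem 5.2]{F}, so there is no internal argument to measure yours against. Your proposal is a correct proof along the standard lines. The relative Leray--Serre spectral sequence of the locally trivial fibration pair $(\T,\T_0)\to M$ does collapse onto the row $q=n$ because the fiber pair is homotopy equivalent to $(\R^n,\R^n-0)$, and the local system $\mathcal{L}^n$ is the orientation sheaf; note that the paper's description of that sheaf as $U\mapsto H^n(p^{-1}(U);R)$ must be read as the relative groups $H^n(p^{-1}(U),p^{-1}(U)\cap\T_0;R)$, since $p^{-1}(U)\simeq U$, so your $\mathcal{L}^n$ is the correct object. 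Defining $\Phi$ as the preimage of the orienting section under the edge isomorphism $H^n(\T,\T_0;R)\to E_2^{0,n}=H^0(M;\mathcal{L}^n)$ is exactly right. The one step you should spell out is why the additive isomorphism coming from the collapse is realized by $x\mapsto p^*(x)\smile\Phi$: the clean way is to use that the relative spectral sequence is a module over the spectral sequence of $\T\to M$ (which is concentrated in the row $q=0$ since the fibers are contractible, so that $p^*\colon H^*(M;R)\to H^*(\T;R)$ is an isomorphism), and that cup product with $\Phi$ induces on $E_2$ the coefficient map $H^p(M;H^0(F;R))\to H^p(M;H^n(F,F_0;R))$ given by multiplication with the fiberwise generator, which is an isomorphism; comparison of abutments then finishes. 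This is precisely the ``Leray--Hirsch for pairs'' you invoke, so it is not a gap, only a point to make explicit. Two small cautions: the alternative Mayer--Vietoris induction as stated only handles $M$ admitting a finite trivializing cover and needs a colimit argument in general; and in the model \eqref{micro tangent bundle} only the total projection $M\times M\to M$ is a genuine product projection --- the subfibration $M\times M-\Delta\to M$ is still merely locally trivial (via charts of $M$), so that model saves you only the verification that the map is a fibration, not the local-coefficient analysis.
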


  The cohomology class $\Phi$ is called the \emph{Thom class} of $M$. Note that the Thom class $\Phi$ is unique for $R=\Z/2$. Now we are ready to define the Stiefel-Whitney classes of $M$.

  \begin{definition}
    Let $R=\Z/2$. The $i$-th \emph{Stiefel-Whitney class} of $M$ is defined by
    \[
      w_i(M)=\phi^{-1}(\mathrm{Sq}^i\phi(1)).
    \]
  \end{definition}

  If $M$ is smooth, then by \cite[Proposition 3.17]{F}, the tangent fiber space $(\T,\T_0)\to M$ is equivalent to the tangent bundle pair $(TM,TM-M)\to M$. So the above definition of the Stiefel-Whitney classes is consistent with the usual smooth case. Moreover, consistency with the definition using the Wu classes is also proved in \cite[Theorem 6.17]{F}.

  We generalize the above definition of the Stiefel-Whitney classes to an equivariant context. So we let a topological group $G$ act on $M$ from the right. Note that $G$ acts on the space of paths $M^I$ by
  \[
    (\ell\cdot g)(t)=\ell(t)\cdot g
  \]
  for $\ell\in M^I$ and $g\in G$. Clearly, this action restricts to $\T$ and $\T_0$ such that the map $p\colon\T\to M$ is $G$-equivariant. For the rest of this section, we will make the following assumption.

  \begin{assumption}
    \label{assumption}
    The cohomology local coefficient systems over $R$ of fibrations
    \begin{align}
      \label{fibration 1}
      &M\xrightarrow{j}M\times_GEG\to BG\\
      \label{fibration 2}
      &(\T,\T_0)\xrightarrow{j}(\T,\T_0)\times_GEG\to BG
    \end{align}
    are trivial.
  \end{assumption}

  We prove the existence of the equivariant Thom class of $M$. For a $G$-space $X$, let $j\colon X\to X\times_GEG$ denote the natural inclusion. For a pair of $G$-spaces $(X,A)$, let $H^*_G(X,A;R)$ denote its equivariant cohomology, that is,
  \[
    H_G^*(X,A;R)=H^*((X,A)\times_GEG;R).
  \]

  \begin{proposition}
    \label{equivariant Thom class}
    There is a unique cohomology class $\Phi_G\in H^n_G(\T,\T_0;R)$ such that
    \[
      j^*(\Phi_G)=\Phi.
    \]
  \end{proposition}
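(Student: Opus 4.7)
The plan is to apply the Serre spectral sequence of the pair fibration \eqref{fibration 2}. By Assumption \ref{assumption}, the local coefficient system is trivial, so
\[
  E_2^{p,q} = H^p(BG;\, H^q(\T,\T_0;R)) \Longrightarrow H_G^{p+q}(\T,\T_0;R).
\]
The key input is the Thom isomorphism (Theorem \ref{Thom isomorphism}), which gives $H^q(\T,\T_0;R) \cong H^{q-n}(M;R)$ for $q \geq n$ and $H^q(\T,\T_0;R) = 0$ for $q < n$. Consequently $E_2^{p,q} = 0$ whenever $q < n$, and $E_2^{0,n} = H^n(\T,\T_0;R)$ is the free rank-one $R$-module generated by $\Phi$.

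From this vanishing, two things follow. First, for each $r \geq 2$ the outgoing differential $d_r\colon E_r^{0,n} \to E_r^{r,\, n-r+1}$ lands in a subquotient of $E_2^{r,\, n-r+1} = 0$, so every such differential vanishes and $E_\infty^{0,n} = E_2^{0,n} = R\cdot\Phi$. Second, $E_\infty^{p,\, n-p}$ is a subquotient of $E_2^{p,\, n-p} = 0$ for every $p \geq 1$, so the filtration on $H^n_G(\T,\T_0;R)$ is concentrated in filtration degree zero. Combining the two, the edge homomorphism
\[
  j^*\colon H^n_G(\T,\T_0;R) \longrightarrow H^n(\T,\T_0;R)
\]
is an isomorphism, and I would define $\Phi_G$ to be the unique preimage of $\Phi$. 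This yields existence and uniqueness simultaneously.

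The main obstacle, such as it is, will be confirming that the edge homomorphism produced by the Serre spectral sequence literally coincides with the restriction map $j^*$ induced by the fiber inclusion $j\colon (\T,\T_0)\hookrightarrow (\T,\T_0)\times_G EG$, so that the equation $j^*(\Phi_G)=\Phi$ is exactly what the argument delivers rather than something only canonically identified with it. This is standard but worth spelling out. Beyond this verification, the sparsity of $E_2$ below row $n$ renders the computation essentially mechanical, so I do not anticipate further difficulty.
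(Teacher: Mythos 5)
Your argument is correct and coincides with the paper's proof: both use the Serre spectral sequence of the fibration \eqref{fibration 2}, the triviality of coefficients from Assumption \ref{assumption}, and the vanishing $E_2^{p,q}=0$ for $q<n$ from Theorem \ref{Thom isomorphism} to conclude that the edge map $j^*$ is an isomorphism in degree $n$. You merely spell out the differential and filtration bookkeeping that the paper leaves implicit.
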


  \begin{proof}
    Let $E_r$ denote the Serre spectral sequence for a fibration \eqref{fibration 2}. By Assumption \ref{assumption}, there is an isomorphism
    \[
      E_2^{p,q}\cong H^p(BG;H^q(\T,\T_0;R)).
    \]
    Then by Theorem \ref{Thom isomorphism}, $E_2^{p,q}=0$ for $q<n$, so $\bigoplus_{p+q=n}E_2^{p,q}=E_2^{0,n}$ and $E_\infty^{0,n}=E_2^{0,n}\cong H^n(\T,\T_0;R)$. Thus we obtain that the map $j^*\colon H_G^n(\T,\T_0;R)\to H^n(\T,\T_0;R)$ is an isomorphism. In particular, we get a unique cohomology class $\Phi_G\in H^n_G(\T,\T_0;R)$ satisfying $j^*(\Phi_G)=\Phi$, completing the proof.
  \end{proof}

  We call the cohomology class $\Phi_G$ the \emph{equivariant Thom class} of $M$. Now we prove the equivariant Thom isomorphism. As mentioned above, the map $p\colon\T\to M$ is $G$-equivariant, and so it induces a map $p_G\colon\T\times_GEG\to M\times_GEG$.

  \begin{theorem}
    \label{equivariant Thom isomorphism}
    The map
    \[
      \phi_G\colon H^*_G(M;R)\to H^{*+n}_G(\T,\T_0;R),\quad x\mapsto p_G^*(x)\smile\Phi_G
    \]
    is an isomorphism.
  \end{theorem}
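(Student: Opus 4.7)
The plan is a spectral sequence comparison. Consider the Serre spectral sequences of the two Borel fibrations in Assumption~\ref{assumption}; by that assumption the local coefficient systems are trivial, so the $E_2$ pages are
\[
  E_2^{p,q}=H^p(BG;H^q(M;R)),\qquad \bar{E}_2^{p,q}=H^p(BG;H^q(\T,\T_0;R))
\]
converging respectively to $H_G^{p+q}(M;R)$ and $H_G^{p+q}(\T,\T_0;R)$. The map $p_G\colon \T\times_GEG\to M\times_GEG$ covers the identity on $BG$, and cup product with the equivariant Thom class $\Phi_G$ is an operation on the cohomology of the total space of the second fibration, so I expect $\phi_G$ to descend to a morphism of these two spectral sequences of bidegree $(0,n)$.

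The essential step is to show that, on $E_2$, this induced morphism is identified with the map
\[
  H^p(BG;H^q(M;R))\longrightarrow H^p(BG;H^{q+n}(\T,\T_0;R))
\]
obtained by applying $\phi$ to the fiber coefficients. Here $p_G^*$ is manifestly filtration-preserving, while cup product with $\Phi_G$ is filtration-preserving by the standard multiplicative property of the Serre spectral sequence of a pair. Composing these and using the identity $j^*(\Phi_G)=\Phi$ from Proposition~\ref{equivariant Thom class} pins down the $E_2$-effect of $\phi_G$ as $H^p(BG;\phi)$.

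By Theorem~\ref{Thom isomorphism} the fiber map $\phi$ is an isomorphism, so $H^p(BG;\phi)$ is an isomorphism for every $p,q$. The comparison theorem for first-quadrant spectral sequences then yields an isomorphism $E_\infty^{p,q}\cong\bar{E}_\infty^{p,q+n}$, and a five-lemma induction up the filtration produces the required isomorphism of abutments $H_G^*(M;R)\cong H_G^{*+n}(\T,\T_0;R)$. The hard part is the verification that cup product with $\Phi_G$ is compatible with the Serre filtration in the expected way, as this requires invoking the multiplicative structure on the Serre spectral sequence for a fibration of pairs; once this is in place, the rest of the argument is formal and parallels the proof of Proposition~\ref{equivariant Thom class}.
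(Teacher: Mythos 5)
Your proposal is correct and takes essentially the same approach as the paper: a comparison of the Serre spectral sequences of the two Borel fibrations, identifying the induced map on $E_2$ with the fiberwise Thom isomorphism $\phi$ (via $j^*(\Phi_G)=\Phi$) and concluding by the comparison theorem. The only presentational difference is that the paper establishes the filtration-compatibility of $x\mapsto p_G^*(x)\smile\Phi_G$ explicitly at the cochain level, by restricting a representing cocycle of $\Phi_G$ over the skeleta of $BG$, rather than citing the multiplicative structure of the Serre spectral sequence of a pair.
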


  \begin{proof}
    Let $B^k$ denote the $k$-skeleton of $BG$. Let $M^k$ and $(\T^k,\T^k_0)$ denote the pullbacks of $M\times_GEG\to BG$ and $(\T,\T_0)\times_GEG\to BG$ over $B_k$. Let $\Psi$ be a cocycle representing $\Phi_G$, and let $\Psi_k=\Psi\vert_{(\T^k,\T^k_0)}$. We define a map
    \[
      \phi_k\colon C^*_G(M^k;R)\to C^{*+n}_G(\T^k,\T^k_0;R),\quad x\mapsto p_G^*(x)\smile\Psi_k
    \]
    of cochain complexes. By definition, there is a commutative diagram
    \[
      \xymatrix{
        C^*_G(M^k;R)\ar[r]^(.48){\phi_k}\ar[d]&C^{*+n}_G(\T^k,\T^k_0;R)\ar[d]\\
        C^*_G(M^{k-1};R)\ar[r]^(.44){\phi_{k-1}}&C^{*+n}_G(\T^{k-1},\T^{k-1}_0;R).
      }
    \]
    Let $E_r$ and $\widehat{E}_r$ denote the Serre spectral sequences for the fibrations
    $M\times_GEG\to BG$ and $(\T,\T_0)\times_GEG\to BG$, respectively. Then the map
    \[
      \bar{\phi}_G\colon C_G^*(M;R)\to C_G^*(\T,\T_0;R),\quad x\mapsto p_G^*(x)\smile\Psi
    \]
    induces a map of spectral sequences $f_r\colon E_r^{p,q}\to\widehat{E}_r^{p,q+n}$ (with degree shift).

    On the other hand, by assumption, there are isomorphisms
    \[
      E_2^{p,q}\cong H^p(BG;H^q(M;R))\quad\text{and}\quad
      \widehat{E}_2^{p,q}\cong H^p(BG;H^q(\T,\T_0;R)).
    \]
    By the construction of these isomorphisms, the map $f_2\colon E_2^{p,q}\to\widehat{E}_2^{p,q+n}$ is identified with the map
    \[
      H^p(BG;H^q(M;R))\to H^p(BG;H^{q+n}(\T,\T_0;R)).
    \]
    induced from $\phi\colon H^q(M;R)\to H^{q+n}(\T,\T_0;R)$. Then by Theorem \ref{Thom isomorphism}, the map $f_\infty\colon E_\infty\to\widehat{E}_\infty$ is an isomorphism, implying the map $\phi_G\colon H^*_G(M;R)\to H^{*+n}_G(\T,\T_0;R)$ is an isomorphism too. Thus the proof is finished.
  \end{proof}

  We are ready to define the equivariant Stiefel-Whitney classes of $M$.

  \begin{definition}
    Let $R=\Z/2$. The $i$-th \emph{equivariant Stiefel-Whitney class} of $M$ is defined by
    \[
      w_i^G(M)=\phi_G^{-1}(\mathrm{Sq}^i\phi_G(1)).
    \]
  \end{definition}

  We will use the following property.

  \begin{proposition}
    \label{SW}
    $j^*(w_i^G(M))=w_i(M)$.
  \end{proposition}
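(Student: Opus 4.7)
The plan is to prove the identity by a diagram chase that reduces to the defining property $j^*(\Phi_G) = \Phi$ together with the naturality of cup products and Steenrod squares.

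First, I would establish a commutative square
\[
  \xymatrix{
    H^*_G(M;R) \ar[r]^(.4){\phi_G} \ar[d]_{j^*} & H^{*+n}_G(\T,\T_0;R) \ar[d]^{j^*} \\
    H^*(M;R) \ar[r]^(.4){\phi} & H^{*+n}(\T,\T_0;R).
  }
\]
This commutes because $p_G$ is the Borel construction of $p$, so $j^* \circ p_G^* = p^* \circ j^*$, and because $j^*$ is multiplicative with $j^*(\Phi_G) = \Phi$ by Proposition \ref{equivariant Thom class}. Hence for any $x \in H^*_G(M;R)$, $j^*(\phi_G(x)) = j^*(p_G^*(x) \smile \Phi_G) = p^*(j^*(x)) \smile \Phi = \phi(j^*(x))$.

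Next, I would apply $\phi$ to both sides of the desired identity and chase. Specializing the square to $x = 1$ gives $j^*(\phi_G(1)) = j^*(\Phi_G) = \Phi = \phi(1)$. Then, using the naturality of $\mathrm{Sq}^i$ under $j^*$ and again the square above,
\[
  \phi(j^*(w_i^G(M))) = j^*(\phi_G(w_i^G(M))) = j^*(\mathrm{Sq}^i \phi_G(1)) = \mathrm{Sq}^i j^*(\phi_G(1)) = \mathrm{Sq}^i \phi(1).
\]
Since $\phi$ is an isomorphism (Theorem \ref{Thom isomorphism}), applying $\phi^{-1}$ yields $j^*(w_i^G(M)) = \phi^{-1}(\mathrm{Sq}^i \phi(1)) = w_i(M)$.

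There is no real obstacle here; the statement is essentially a formal consequence of the uniqueness in Proposition \ref{equivariant Thom class} together with the naturality of the Thom isomorphism and the Steenrod operations. The only point that deserves mention is the identification $j^* \circ p_G^* = p^* \circ j^*$, which follows from the fact that $p_G$ is obtained from the $G$-equivariant map $p\colon\T\to M$ by applying $(-)\times_G EG$, so the inclusion of the fibers over a point of $BG$ commutes with $p$ and $p_G$.
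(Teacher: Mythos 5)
Your proof is correct and follows essentially the same route as the paper: establish the commutative square intertwining $\phi_G$ and $\phi$ via $j^*(\Phi_G)=\Phi$, then use naturality of $\mathrm{Sq}^i$ and invertibility of $\phi$ to conclude. The only difference is cosmetic (you apply $\phi$ to both sides and invert at the end, while the paper pushes $\phi^{-1}$ through the chain directly).
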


  \begin{proof}
    There is a commutative diagram
    \[
      \xymatrix{
        H_G^*(M;R)\ar[r]^(.4){\phi_G}\ar[d]_{j^*}&H_G^{*+n}(\T,\T_0;R)\ar[d]^{j^*}\\
        H^*(M;R)\ar[r]^(.4){\phi}&H^{*+n}(\T,\T_0;R).
      }
    \]
    Indeed, for each $x\in H_G^*(M;R)$, we have
    \[
      j^*(\phi_G(x))=j^*(p_G^*(x)\smile\Phi_G)=j^*(p_G^*(x))\smile j^*(\Phi_G)=p(j^*(x))\smile\Phi=\phi(j^*(x)).
    \]
    Then we get
    \begin{align*}
      j^*(w_i^G(M))&=j^*(\phi_G^{-1}(\mathrm{Sq}^i\phi_G(1)))=\phi^{-1}(j^*(\mathrm{Sq}^i\phi_G(1)))=\phi^{-1}(\mathrm{Sq}^ij^*(\phi_G(1)))\\
      &=\phi^{-1}(\mathrm{Sq}^i\phi(j^*(1)))=\phi^{-1}(\mathrm{Sq}^i\phi(1))=w_i(M).
    \end{align*}
    Thus the proof is done.
  \end{proof}


  \section{Proof of Theorem \ref{main} and its extension}\label{Proof of Theorem and its extension}

  This section proves Theorem \ref{main} and its extension to a real moment-angle manifold. Let $K$ be a simplicial complex with $m$ vertices, and let $T^m=(S^1)^m$. Then $T^m$ acts naturally on $Z_K$. Suppose that $Z_K$ is a topological manifold. Then $Z_K$ is connected. Since $BT^m$ is simply-connected, the moment-angle manifold $Z_K$ with an action of $T^m$ satisfies Assumption \ref{assumption}. Then in particular, the equivariant Stiefel-Whitney classes $w_i^{T^m}(Z_K)$ can be defined.

  Now we are ready to prove Theorem \ref{main}. Let $DJ_K$ denote the Borel construction $Z_K\times_{T^m}ET^m$.

  \begin{proof}
    [Proof of Theorem \ref{main}]
    Consider a fibration $Z_K\xrightarrow{j}DJ_K\to BT^m$. As in \cite[Theorem 4.8]{DJ}, the map $DJ_K\to BT^m$ is surjective in cohomology, so by the standard spectral sequence argument, we can see that the map $j\colon Z_K\to DJ_K$ is trivial in cohomology. Thus the proof is complete by Proposition \ref{SW}.
  \end{proof}

  We extend Theorem \ref{main} to a real moment-angle manifold. Let $K$ be a simplicial complex with vertex set $\{1,2,\ldots,m\}$. The real momend-angle complex for $K$ is defined by
  \[
    \R Z_K=\bigcup_{\sigma\in K}\R Z_\sigma
  \]
  where $\R Z_\sigma=X_1\times\cdots\times X_m$ such that $X_i=D^1$ for $i\in\sigma$ and $X_i=S^0$ for $i\not\in\sigma$. As well as a moment-angle manifold, we call $\R Z_K$ a real moment-angle manifold if it is a topological manifold. Cai \cite[Theorem 2.3]{C} proved that $\R Z_K$ is a topological manifold whenever $K$ is a simplicial sphere. Note that there is a natural action of a 2-torus $(\Z/2)^m$ on $\R Z_K$.

  \begin{lemma}
    \label{RZ_K action}
    The following group actions are trivial in mod 2 cohomology:
    \begin{enumerate}
      \item the $(\Z/2)^m$-action on $\R Z_K$;

      \item the $(\Z/2)^m$-action on $(\T,\T_0)$ over $\R Z_K$ whenever $\R Z_K$ is a topological manifold.
    \end{enumerate}
  \end{lemma}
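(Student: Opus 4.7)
The plan is to establish (1) first and then deduce (2) from it via the Thom isomorphism of Theorem \ref{Thom isomorphism}.

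For (1), I cannot simply imitate the argument used in the proof of Theorem \ref{main}, since $B(\Z/2)^m$ has nontrivial fundamental group and so the local coefficient system of the Borel fibration $\R Z_K \to \R Z_K \times_{(\Z/2)^m} E(\Z/2)^m \to B(\Z/2)^m$ is not trivial for free. My strategy is to argue directly on a $(\Z/2)^m$-equivariant cellular model. The product CW structure on $(D^1)^m$ coming from $D^1 = \{-1\} \cup (-1,1) \cup \{+1\}$ restricts to a $(\Z/2)^m$-equivariant CW structure on $\R Z_K$ whose cells are indexed by pairs $(\tau, \eta)$ with $\tau \in K$ and $\eta \colon [m] \setminus \tau \to \{\pm 1\}$. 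On the resulting mod $2$ cellular chain complex each generator $g_i \in (\Z/2)^m$ fixes cells with $i \in \tau$ and sends $(\tau, \eta) \mapsto (\tau, g_i\eta)$, with $\eta(i)$ flipped, when $i \notin \tau$. I plan to construct a chain homotopy $h$ from $g_i$ to the identity using a coning-over-$i$ operation roughly of the form $h(\tau, \eta) = (\tau \cup \{i\}, \eta|_{[m]\setminus(\tau \cup \{i\})})$ when $\tau \cup \{i\} \in K$; the identity $\partial h + h\partial = g_i + \mathrm{id}$ is straightforward on cells with $i \in \tau$ or $\tau \cup \{i\} \in K$, but on the remaining cells the naive coning construction fails because the target cell is absent from $\R Z_K$, and cycle-valued corrections must be added to complete the definition of $h$.

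For (2), I combine (1) with the Thom isomorphism $\phi \colon H^*(\R Z_K; \Z/2) \to H^{*+n}(\T, \T_0; \Z/2)$. The mod $2$ Thom class $\Phi$ is unique; since each $g \in (\Z/2)^m$ induces a self-homeomorphism $g_\T$ of $(\T, \T_0)$ covering $g \colon \R Z_K \to \R Z_K$, the pullback $g_\T^* \Phi$ is again a Thom class and hence equals $\Phi$. Using $(\Z/2)^m$-equivariance of $p \colon \T \to \R Z_K$ and naturality of the cup product,
\[
g_\T^* \phi(x) = g_\T^*(p^* x \smile \Phi) = p^*(g^* x) \smile \Phi = \phi(g^* x),
\]
so $\phi$ intertwines the $(\Z/2)^m$-action on $H^*(\R Z_K; \Z/2)$ with that on $H^{*+n}(\T, \T_0; \Z/2)$. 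Triviality of the action on $H^*(\R Z_K; \Z/2)$ by (1) therefore yields triviality on $H^{*+n}(\T, \T_0; \Z/2)$, as required.

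The main obstacle is part (1): constructing the necessary cycle-valued corrections to $h$ on cells $(\tau, \eta)$ with $i \notin \tau$ and $\tau \cup \{i\} \notin K$ is the heart of the argument, and it is at this point that one must exploit the combinatorics of $K$ (and, for part (2), the manifold hypothesis on $\R Z_K$, which enters via the Thom isomorphism).
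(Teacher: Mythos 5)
Your part (2) is correct and is essentially the paper's argument: the mod $2$ Thom class is unique (equivalently, $H^n(\T,\T_0;\Z/2)\cong\Z/2$ forces it to be fixed), and then $(\Z/2)^m$-equivariance of $p$ together with part (1) and the Thom isomorphism gives triviality on all of $H^*(\T,\T_0;\Z/2)$. Your opening observation about why the lemma is needed at all --- that $B(\Z/2)^m$ is not simply connected, so Assumption \ref{assumption} does not come for free as it does for $T^m$ --- is also exactly right.

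Part (1), however, has a genuine gap, and you have correctly located it yourself: the chain homotopy $h$ between $g_i$ and the identity is only defined on cells $(\tau,\eta)$ with $i\in\tau$ or $\tau\cup\{i\}\in K$, and the ``cycle-valued corrections'' needed on the remaining cells are precisely the content of the statement. You have not produced them, and it is not clear they can be produced cell-by-cell: abstractly a chain homotopy exists over $\Z/2$ if and only if $g_i$ acts trivially on mod $2$ homology, so constructing the corrections without further input is circular. The paper avoids this by citing Cai [C, Proposition 3.3], which gives an explicit description of the mod $2$ homology of $\R Z_K$ in terms of the very cell structure you describe: every mod $2$ homology class admits a representative cycle that is a sum over all sign vectors $\eta$ of cells $(\tau,\eta)$ (a symmetrized chain), and such a chain is manifestly fixed by the $(\Z/2)^m$-action. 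Triviality in homology, hence in cohomology, follows immediately. So the missing ingredient in your argument is an identification of explicit $(\Z/2)^m$-invariant representatives of the homology classes (or an equivalent computation of $H_*(\R Z_K;\Z/2)$), rather than a formal chain-homotopy construction; to complete your proof you should either carry out that computation or invoke Cai's result.
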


  \begin{proof}
    (1) By \cite[Proposition 3.3]{C}, we can see that any mod 2 homology class of $\R Z_K$ admits a representative which is fixed by the action of $(\Z/2)^m$. So the action of $(\Z/2)^m$ on $\R Z_K$ is trivial in mod 2 homology, so it is trivial in mod 2 cohomology too.

    \noindent(2) By Theorem \ref{Thom isomorphism}, $H^n(\T,\T_0;\Z/2)\cong\Z/2$. Then the action of $(\Z/2)^m$ on $H^n(\T,\T_0;\Z/2)$ is trivial, so in particular, the Thom class $\Phi$ is fixed by the action of $(\Z/2)^m$. By Theorem \ref{Thom isomorphism}, each element of $H^*(\T,\T_0;\Z/2)$ is of the form $p^*(x)\smile\Phi$ for some $x\in H^*(\R Z_K;\Z/2)$. Then since $p\colon(\T,\T_0)\to\R Z_K$ is $(\Z/2)^m$-equivariant and the action of $(\Z/2)^m$ on $\R Z_K$ is trivial in mod 2 cohomology, for any $g\in(\Z/2)^m$, we have
    \[
      (p^*(x)\smile\Phi)\cdot g=(p^*(x)\cdot g)\smile(\Phi\cdot g)=p^*(x\cdot g)\smile\Phi=p^*(x)\smile\Phi.
    \]
    Thus the action of $(\Z/2)^m$ on $(\T,\T_0)$ is trivial in mod 2 cohomology, completing the proof.
  \end{proof}

  By Lemma \ref{RZ_K action}, the action of $(\Z/2)^m$ on $\R Z_K$ satisfies Assumption \ref{assumption} whenever $\R Z_K$ is a topological manifold. Then we can define its equivariant Stiefel-Whitney classes. Now we are ready to prove an extension of Theorem \ref{main} to a real-moment-angle manifold.

  \begin{theorem}
    The Stiefel-Whitney classes of a real moment-angle manifold, not necessarily smooth, are trivial.
  \end{theorem}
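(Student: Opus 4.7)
The plan is to replay the proof of Theorem \ref{main} verbatim with the torus $T^m$ replaced by the 2-torus $(\Z/2)^m$. Lemma \ref{RZ_K action} has already verified that Assumption \ref{assumption} holds for the natural $(\Z/2)^m$-action on $\R Z_K$ and on its tangent fiber space, so the equivariant Thom class, the equivariant Thom isomorphism, and the equivariant Stiefel-Whitney classes $w_i^{(\Z/2)^m}(\R Z_K) \in H^i_{(\Z/2)^m}(\R Z_K; \Z/2)$ are all defined, and Proposition \ref{SW} gives
\[
j^*\bigl(w_i^{(\Z/2)^m}(\R Z_K)\bigr) = w_i(\R Z_K),
\]
where $j\colon \R Z_K \to \R Z_K \times_{(\Z/2)^m} E(\Z/2)^m$ is the fiber inclusion. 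Hence it suffices to show that $j^*$ vanishes on positive-degree mod 2 cohomology.

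I would accomplish this exactly as in the complex proof, by exhibiting a cohomology surjection from the total space of the Borel construction onto $B(\Z/2)^m$ and then invoking the standard spectral-sequence argument. The real analogue of \cite[Theorem 4.8]{DJ} gives the isomorphism
\[
H^*\bigl(\R Z_K \times_{(\Z/2)^m} E(\Z/2)^m; \Z/2\bigr) \cong \Z/2[v_1,\ldots,v_m]/I_K
\]
with $|v_i|=1$ and $I_K$ the Stanley-Reisner ideal, compatibly with the projection to $B(\Z/2)^m$; in particular this projection is surjective in mod 2 cohomology. Consequently, in the Serre spectral sequence of the fibration $\R Z_K \to \R Z_K \times_{(\Z/2)^m} E(\Z/2)^m \to B(\Z/2)^m$, the base-edge generators all survive to permanent cycles in the total space, which by a standard argument forces the fiber-edge homomorphism $j^*$ to be zero in positive degrees. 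Combined with the identity displayed above, $w_i(\R Z_K)=0$ for all $i>0$.

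The only ingredient not already provided by the excerpt is the mod 2 real Davis-Januszkiewicz calculation, and this is the one step I expect to be substantive; however it is standard in toric topology and is proved by precisely the same polyhedral product argument as the complex case, with $\C P^\infty = BS^1$ replaced throughout by $\R P^\infty = B(\Z/2)$. With this cited, the proof is a formal consequence of Lemma \ref{RZ_K action} and Proposition \ref{SW}, in complete parallel to the proof of Theorem \ref{main}.
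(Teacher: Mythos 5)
Your proposal is correct and is essentially identical to the paper's proof: the paper likewise invokes Lemma \ref{RZ_K action} to verify Assumption \ref{assumption}, cites \cite[Theorem 4.8]{DJ} for the surjectivity of $H^*(B(\Z/2)^m;\Z/2)\to H^*(\R Z_K\times_{(\Z/2)^m}E(\Z/2)^m;\Z/2)$, deduces that $j^*$ vanishes in positive degrees, and concludes via Proposition \ref{SW}. The real Davis--Januszkiewicz computation you flag as the one substantive input is indeed exactly what the paper cites, so no gap remains.
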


  \begin{proof}
    By \cite[Theorem 4.8]{DJ}, the map $\R Z_K\times_{(\Z/2)^m}E(\Z/2)^m\to B(\Z/2)^m$ is surjective in mod 2 cohomology, so the map $\R Z_K\to \R Z_K\times_{(\Z/2)^m}E(\Z/2)^m$ is trivial in mod 2 cohomology as in the proof of Theorem \ref{main}. Thus the proof is complete by Proposition \ref{SW}.
  \end{proof}


  \section{Quotient manifold}\label{Quotient manifold}

  There are important manifolds given by the quotient of a moment-angle manifold $Z_K$ by a free action of a subtorus $T\subset T^m$, where $m$ is the number of vertices of a simplicial complex $K$. A typical example is a quasitoric manifold \cite{BP}, and there are other classes of such manifolds \cite{BP,Fr,P}. Recall that our starting point of study is the fact that every moment-angle manifold $Z_K$ is null-cobordant. So it is natural to ask whether or not the quotient manifold $Z_K/T$ is null-cobordant. This section gives an answer to this question by sorting with the dimension of $T$.

  Throughout this section, let $K$ be a simplicial complex of dimension $n-1$ having the vertex set $\{1,2,\ldots,m\}$, and let $T$ be a subtorus of $T^m$ acting freely on $Z_K$. If $K$ has no simplex, we set $n=0$. Clearly, we must have $\dim T\le m-n$. First, we examine the case $\dim T=m-n$, which includes quasitoric manifolds.

  \begin{example}
    Consider the case that $K$ is the boundary of an $n$-simplex, where $m=n+1$ in this case. The complex projective space $\C P^n$ is the quotient of $Z_K$ by a free action of a subtorus $T$ of $T^n$ with $\dim T=1$. By \cite[Theorem 1.1]{S}, $\C P^n$ is null-cobordant if only if $n$ is odd. Then $\C P^n$ is not always null-cobordant. It is well known that the total Stiefel-Whitney class of $\C P^n$ is given by
    \[
      (1+u)^{n+1}
    \]
    where $u$ is a generator of $H^2(\C P^n;\Z/2)\cong\Z/2$. So the Stiefel-Whitney classes of $\C P^n$ are trivial if and only if $n=2^k-1$ for some $k$. Then the Stiefel-Whitney classes of $\C P^n$ are not trivial, even if it is null-cobordant.
  \end{example}

Thus for $\dim T=m-n$, $Z_K/T$ being null-cobordant and the triviality of its Stiefel-Whitney classes depend on each $Z_K/T$. Next, we consider the case $\dim T<m-n$ and prove:

\begin{theorem}
  \label{SW number}
  If $\dim T<m-n$ and $Z_K/T$ is a topological manifold, then all Stiefel-Whitney numbers of $Z_K/T$ vanish.
\end{theorem}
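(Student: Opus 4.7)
The plan is to produce a free circle action on $Z_K/T$ and then invoke the fact that the total space of a principal $S^1$-bundle of manifolds has vanishing Stiefel-Whitney numbers. First, note that the residual torus $T' := T^m/T$ has dimension $m-\dim T>n$ and acts on $Z_K/T$; along the various strata, its stabilizers are precisely the subtori $\overline{T^\sigma} := T^\sigma\cdot T/T\subset T'$ for $\sigma\in K$, each of dimension $|\sigma|-\dim(T\cap T^\sigma)=|\sigma|\le n<\dim T'$.

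The key step will be to exhibit a subcircle $S\subset T'$ such that $S\cap\overline{T^\sigma}=\{1\}$ for every $\sigma\in K$. At the Lie algebra level this amounts to choosing a rational line in $\mathfrak{t}'$ that avoids the finitely many proper subspaces $\overline{\mathfrak{t}^\sigma}$, which is immediate by dimension counting since $\dim\overline{\mathfrak{t}^\sigma}\le n<\dim\mathfrak{t}'$. Upgrading this to the integral (Lie group) condition---equivalently, arranging that the primitive cocharacter $v\in\Lambda(T')$ generating $S$ have primitive image in every quotient lattice $\Lambda(T')/\Lambda(\overline{T^\sigma})$, so that $S$ meets each $\overline{T^\sigma}$ trivially as a Lie group rather than in a finite nontrivial subgroup---is a more delicate genericity argument in the cocharacter lattice. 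I expect this lattice-primitivity step to be the main technical obstacle in the proof.

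Once such an $S$ is in hand, the quotient $\pi\colon Z_K/T\to (Z_K/T)/S$ is a principal $S^1$-bundle between topological manifolds. The tangent fiber space (in the sense of Section~\ref{Equivariant Stiefel-Whitney class}) splits as $T(Z_K/T)\cong\pi^*T((Z_K/T)/S)\oplus\epsilon$ with $\epsilon$ a trivial line bundle of vertical directions; multiplicativity of the Stiefel-Whitney classes then gives $w_i(Z_K/T)=\pi^*w_i((Z_K/T)/S)$ for every $i$, so every top-degree Stiefel-Whitney monomial on $Z_K/T$ is a $\pi^*$-pullback. By the projection formula,
\[
\langle \pi^*\alpha,[Z_K/T]\rangle=\langle\alpha,\pi_*[Z_K/T]\rangle=0,
\]
since $\pi_*[Z_K/T]\in H_{\dim Z_K/T}((Z_K/T)/S;\Z/2)=0$ for dimensional reasons. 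Equivalently, the null-cobordism is realized concretely by $Z_K/T=\partial\bigl((Z_K/T)\times_S D^2\bigr)$, from which the vanishing of every Stiefel-Whitney number of $Z_K/T$ also follows directly, as in the argument before Question~\ref{question 1}.
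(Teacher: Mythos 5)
There is a genuine gap, and it sits exactly where you flag it: the existence of a subcircle $S\subset T^m/T$ acting freely on $Z_K/T$. The Lie-algebra dimension count only produces a circle meeting each stabilizer $\overline{T^\sigma}$ in a \emph{finite} subgroup, i.e.\ an almost free action; the upgrade to a genuinely free circle is not a delicate genericity argument that happens to work---it is false in general. A free circle in $T^m/T$ lifts to a free $(\dim T+1)$-dimensional subtorus of $T^m$ containing $T$, so your plan can only succeed when $T$ is \emph{not} maximal among subtori acting freely on $Z_K$. The paper points this out explicitly just after Corollary \ref{null cobordant} (when $T$ is non-maximal, $S^1$ acts freely on $Z_K/T$ and null-cobordism is immediate, exactly as in your last paragraph), and Proposition \ref{cyclic polytope} supplies a counterexample to your key step: for $Z_{\partial C_6(9)}$ one has $m=9$, $n=6$, and a maximal free subtorus $T$ with $\dim T=2<3=m-n$, while no $3$-dimensional subtorus of $T^9$ acts freely; hence no circle in $T^9/T$ acts freely on $Z_{\partial C_6(9)}/T$ even though the hypotheses of Theorem \ref{SW number} hold. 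In other words, your argument covers precisely the easy cases and misses the ones the theorem is actually about.

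The paper's proof is designed to get around this obstruction. It extends $T$ only to an \emph{almost} free action: Lemmas \ref{pure}, \ref{complement} and \ref{extension} produce an $(m-n)$-dimensional subtorus $T(\Lambda)\supset T$ acting with finite isotropy, which is always possible by the Zariski-genericity argument you had in mind (finite isotropy is all that genericity buys). Finite isotropy is invisible rationally, so Lemmas \ref{free action}, \ref{H iso} and \ref{vanishing} show that $H^d_{S}(Z_K/T;\Q)\cong H^d_{S(\Lambda)}(Z_K/T(\Lambda);\Q)\cong H^d_{T^m}(Z_K;\Q)$ maps trivially to $H^d(Z_K/T;\Q)$ in the top degree $d=m+n-\dim T$ (Proposition \ref{H^d}); one then concludes with the equivariant Stiefel-Whitney classes of Section \ref{Equivariant Stiefel-Whitney class} and Proposition \ref{SW}, since every top-degree monomial in the $w_i(Z_K/T)$ is the restriction of an equivariant class and hence vanishes. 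If you want to salvage your approach, you would have to replace ``free circle action'' by ``almost free circle action'' and then explain why an almost free action still kills Stiefel-Whitney numbers---which is essentially what the rational equivariant cohomology argument of the paper accomplishes.
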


The following corollary is immediate from Theorem \ref{SW number}.

\begin{corollary}
  \label{null cobordant}
  If $\dim T<m-n$ and $Z_K/T$ is a smooth manifold, then $Z_K/T$ is null-cobordant.
\end{corollary}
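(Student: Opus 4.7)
The plan is to deduce the corollary from Theorem \ref{SW number} by invoking Thom's classical theorem in unoriented cobordism, which states that a closed smooth manifold is null-cobordant if and only if all of its Stiefel-Whitney numbers vanish. First I would check that $Z_K/T$ is closed: the moment-angle manifold $Z_K$ is a finite union of products of discs and circles inside the compact polydisc $(D^2)^m$, hence compact and without boundary, and the free action of the compact subtorus $T$ then produces a compact boundaryless quotient.

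Next, since $Z_K/T$ is assumed to be smooth, the topologically defined Stiefel-Whitney classes used in Theorem \ref{SW number} coincide with the classical smooth ones. This is precisely the consistency statement noted in Section \ref{Equivariant Stiefel-Whitney class} following Theorem \ref{Thom isomorphism}, based on Fadell's identification \cite[Proposition 3.17]{F} of the tangent fiber space $(\T,\T_0)\to M$ with the tangent bundle pair $(TM,TM-M)\to M$ in the smooth case. Hence the Stiefel-Whitney numbers produced by Theorem \ref{SW number} agree with the smooth Stiefel-Whitney numbers to which Thom's criterion refers.

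With these points noted, the corollary is immediate: Theorem \ref{SW number} gives that every Stiefel-Whitney number of $Z_K/T$ vanishes, and Thom's theorem delivers null-cobordism. There is no real obstacle once Theorem \ref{SW number} is in hand; the entire substance of the corollary rests on that theorem, and the present step is merely a one-line reduction to a classical fact in cobordism theory.
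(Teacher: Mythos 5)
Your proof is correct and matches the paper's intent exactly: the paper simply declares the corollary ``immediate from Theorem \ref{SW number},'' the implicit argument being precisely Thom's criterion that a closed smooth manifold is null-cobordant if and only if all its Stiefel-Whitney numbers vanish. Your additional checks (that $Z_K/T$ is closed, and that Fadell's topologically defined Stiefel-Whitney classes agree with the smooth ones via \cite[Proposition 3.17]{F}) are exactly the details the paper leaves tacit.
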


If $T$ is not maximal among subtori of $T^m$ acting freely on a moment-angle manifold $Z_K$, then $S^1$ acts freely on the quotient manifold $Z_K/T$. So as well as $Z_K$, we can see that $Z_K/T$ is null-cobordant. Then Corollary \ref{null cobordant} makes sense only when $T$ is a maximal subtorus of $T^m$ acting freely on $Z_K$. We will give below such an example.

We will take three steps to prove Theorem \ref{SW number}. The first step is to extend the free action of $T$ on $Z_K$ to an almost free action of $(m-n)$-dimensional subtorus of $T^m$ containing $T$. Recall that a simplicial complex $K$ is said to be \emph{pure} if all of its maximal simplices have the same dimension.

\begin{lemma}
  \label{pure}
  If $Z_K/T$ is a topological manifold, then $K$ is pure.
\end{lemma}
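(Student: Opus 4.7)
The plan is to use the natural stratification of $Z_K$ by the simplices of $K$ and compare local dimensions at points in the top strata. For each $\sigma\in K$ set
\[
  T_\sigma^\circ=\bigl\{(z_1,\dots,z_m)\in Z_K:\;z_i\in\mathrm{int}(D^2)\iff i\in\sigma\bigr\},
\]
so the $T_\sigma^\circ$ partition $Z_K$ and are each preserved by the $T^m$-action. The first step is to check that when $\sigma$ is a maximal simplex of $K$, a sufficiently small neighborhood of any point $z\in T_\sigma^\circ$ in $Z_K$ is homeomorphic to an open subset of $\mathrm{int}(D^2)^\sigma\times(S^1)^{[m]\setminus\sigma}$, and hence to $\R^{m+|\sigma|}$. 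This is the step that uses the definition of $Z_K$ substantively: a perturbation of $z$ within $Z_K$ can push a coordinate $z_i$ with $i\notin\sigma$ off $S^1$ into $\mathrm{int}(D^2)$ only when $\sigma\cup\{i\}\in K$, which maximality of $\sigma$ forbids. Thus the local dimension of $Z_K$ at $z$ is $m+|\sigma|$.

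Since $T\subset T^m$ acts freely on $Z_K$ and preserves each stratum, the image $[z]\in Z_K/T$ has a neighborhood homeomorphic to $\R^{m+|\sigma|-\dim T}$. Next, $Z_K$ is connected—it is the union of the subspaces $Z_\sigma$, each of which contains $Z_\emptyset=(S^1)^m$—so $Z_K/T$ is a connected topological manifold and therefore has a well-defined global dimension. Consequently $m+|\sigma|-\dim T$ is independent of the choice of maximal simplex $\sigma$, forcing all maximal simplices of $K$ to have the same cardinality. That is, $K$ is pure.

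The main technical point is the local analysis around a point $z\in T_\sigma^\circ$ for $\sigma$ maximal; once the Euclidean chart of dimension $m+|\sigma|$ is in hand, the rest is a direct dimension count using freeness of the $T$-action and connectedness of $Z_K$. The (degenerate) case where $K$ has at most one maximal simplex is trivially covered.
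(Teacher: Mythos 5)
Your core computation is exactly the paper's: for a maximal simplex $\sigma$, the set of points whose $i$-th coordinate lies in $\mathrm{Int}(D^2)$ precisely for $i\in\sigma$ is open in $Z_K$ (maximality is what prevents a coordinate outside $\sigma$ from being perturbed into the interior of the disk), giving a Euclidean chart of dimension $m+|\sigma|$, and connectedness then forces all maximal simplices to have the same cardinality. The paper uses the same chart, namely $U_i=\mathrm{Int}(D^2)$ for $i\in\sigma$ and $U_i=S^1\setminus\{x_0\}$ for $i\notin\sigma$.

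The one step you should not wave at is ``the image $[z]\in Z_K/T$ has a neighborhood homeomorphic to $\R^{m+|\sigma|-\dim T}$,'' which you justify only by freeness of the action. There are two problems. First, local triviality of $Z_K\to Z_K/T$ does not follow from freeness of a topological group action alone; one needs the existence of local cross-sections, i.e.\ \cite[Theorem 1.7.19]{Pal}, which is what the paper invokes. Second, even granting a local trivialization $\pi^{-1}(V)\cong V\times T$, knowing that $V\times T$ is homeomorphic to an open subset of $\R^{m+|\sigma|}$ does not by itself make $V$ a Euclidean neighborhood, or even a manifold: Euclidean space has non-manifold direct factors. The gap closes easily because the hypothesis already grants that $Z_K/T$ is a manifold, so $V$ is a $d$-manifold for some $d$; then $V\times T$ is simultaneously a $(d+\dim T)$-manifold and open in an $(m+|\sigma|)$-manifold, and invariance of domain gives $d=m+|\sigma|-\dim T$. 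The paper avoids the issue by arguing in the opposite direction: Palais gives a principal $T$-bundle $T\to Z_K\to Z_K/T$, so $Z_K/T$ being a manifold forces $Z_K$ to be one (the easy direction, since the total space is locally base times fiber), and the dimension count is carried out in $Z_K$ itself, where no quotient appears. Either fix works; the paper's order is the cleaner one.
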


\begin{proof}
  By \cite[Theorem 1.7.19]{Pal}, there is a principal fiber bundle $T\to Z_K\to Z_K/T$. So if $Z_K/T$ is a topological manifold, then $Z_K$ is a topological manifold too. Let $\sigma$ be a maximal simplex of $K$, and let $U_i=\mathrm{Int}(D^2)$ for $i\in\sigma$ and $U_i=S^1-\{x_0\}$ for $i\not\in\sigma$, where $x_0\in S^1$. Then $U_1\times\cdots\times U_m$ is an open set of $Z_K$, which is homeomorphic with $\R^{m+|\sigma|}$. Thus if $Z_K$ is a topological manifold, then $K$ must be pure, completing the proof.
\end{proof}

We introduce a rational characteristic matrix of $K$. An integer matrix $(\lambda_1,\ldots,\lambda_m)$ for $\lambda_1,\ldots,\lambda_m\in\Z^n$ is called a \emph{rational characteristic matrix} of $K$ if the column vectors $\lambda_{i_1},\ldots,\lambda_{i_k}$ are linearly independent over $\Q$ for each simplex $\{i_1,\ldots,i_k\}\in K$. For an $n\times m$ matrix $A=(a_1,\ldots,a_m)$ and a subset $\sigma=\{i_1<\cdots<i_k\}\subset\{1,\ldots,m\}$, let
\[
  A(\sigma)=(a_{i_1},\ldots,a_{i_k}).
\]

\begin{lemma}
  \label{complement}
  Suppose that $K$ is pure and integer vectors $\lambda_1,\ldots,\lambda_n,\theta_1,\ldots,\theta_{m-n}\in\Z^m$ satisfy the following conditions:

  \begin{enumerate}
    \item $\lambda_1,\ldots,\lambda_n,\theta_1,\ldots,\theta_{m-n}$ are linearly independent over $\Q$;

    \item $\lambda_i$ and $\theta_j$ are orthogonal for each $i=1,\ldots,n$ and $j=1,\ldots,m-n$.
  \end{enumerate}
  Then $\Lambda={}^t(\lambda_1,\ldots,\lambda_n)$ is a rational characteristic matrix of $K$ if and only if $\det\Theta(\bar{\sigma})\ne 0$ for each maximal simplex $\sigma\in K$, where $\Theta={}^t(\theta_1,\ldots,\theta_{m-n})$ and $\bar{\sigma}$ denotes the complement of $\sigma$ in $\{1,\ldots,m\}$.
\end{lemma}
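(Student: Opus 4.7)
The plan is to recast both determinant conditions as the injectivity of certain restrictions of $\Lambda$ and $\Theta$, and then to invoke the orthogonal-complement duality on $\Q^m$ determined by assumptions (1) and (2). I would proceed in three steps.

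First, I would reduce the rational-characteristic-matrix condition to one involving only maximal simplices of $K$. Since $K$ is pure of dimension $n-1$, every maximal simplex has exactly $n$ vertices and every simplex is contained in some maximal simplex; moreover, any subset of a linearly independent set is linearly independent. Hence $\Lambda$ is a rational characteristic matrix of $K$ if and only if $\det \Lambda(\sigma) \ne 0$ for every maximal simplex $\sigma \in K$. For each such $\sigma$, both $\Lambda(\sigma)$ and $\Theta(\bar\sigma)$ are square, so it suffices to prove that for an arbitrary $n$-subset $\sigma \subset \{1,\ldots,m\}$ one has $\det \Lambda(\sigma) \ne 0$ if and only if $\det \Theta(\bar\sigma) \ne 0$.

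Next, I would set up the linear-algebraic framework. Let $L = \mathrm{span}_\Q(\lambda_1,\ldots,\lambda_n)$ and $T = \mathrm{span}_\Q(\theta_1,\ldots,\theta_{m-n})$ in $\Q^m$. Condition (1) yields $L \oplus T = \Q^m$, and condition (2) says $L \perp T$, so $T = L^\perp$. View $\Lambda$ as the $\Q$-linear map $\Q^m \to \Q^n$, $v \mapsto (\lambda_i \cdot v)_{i=1}^n$; then $\ker \Lambda = L^\perp = T$. Letting $\Q^\sigma \subset \Q^m$ denote the coordinate subspace spanned by $\{e_i : i \in \sigma\}$, the matrix $\Lambda(\sigma)$ represents the restriction $\Lambda|_{\Q^\sigma}\colon \Q^\sigma \to \Q^n$ (both spaces $n$-dimensional), so $\det \Lambda(\sigma) \ne 0$ is equivalent to $\Q^\sigma \cap T = 0$. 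Symmetrically, $\det \Theta(\bar\sigma) \ne 0$ is equivalent to $\Q^{\bar\sigma} \cap L = 0$.

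Finally, I would finish by a short duality argument. Using $(\Q^\sigma)^\perp = \Q^{\bar\sigma}$ and $T^\perp = L$, we have
\[
  (\Q^\sigma + T)^\perp = (\Q^\sigma)^\perp \cap T^\perp = \Q^{\bar\sigma} \cap L,
\]
so $\Q^\sigma + T = \Q^m$ iff $\Q^{\bar\sigma} \cap L = 0$. Since $\dim \Q^\sigma + \dim T = n + (m-n) = m$, one also has $\Q^\sigma + T = \Q^m$ iff $\Q^\sigma \cap T = 0$. Combining, $\Q^\sigma \cap T = 0$ iff $\Q^{\bar\sigma} \cap L = 0$, completing the proof. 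There is no serious obstacle here — the argument is a compact linear-algebra exercise — but I would take care with the index conventions, since the lemma has $\lambda_i$ as rows of $\Lambda$ while the definition of a rational characteristic matrix is phrased in terms of columns, and one must carefully match the coordinate subspace $\Q^\sigma$ to the columns of $\Lambda$ indexed by $\sigma$.
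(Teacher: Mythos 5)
Your proof is correct, but it is organized quite differently from the paper's. The paper argues by contradiction and by explicit computation: assuming (after reordering) $\sigma=\{1,\dots,n\}$, it splits each $\lambda_i$ and $\theta_j$ into its first $n$ and last $m-n$ coordinates, uses $\det\Theta(\bar\sigma)=0$ to produce a nonzero combination $\sum c_j\theta_j$ whose second block vanishes, expresses its first block in terms of the $\lambda_i(1)$ via $\det\Lambda(\sigma)\ne 0$, and exhibits two vectors that fail the orthogonality hypothesis; the converse implication is then dispatched with a ``quite similarly.'' You instead recast $\det\Lambda(\sigma)\ne 0$ as $\Q^\sigma\cap T=0$ and $\det\Theta(\bar\sigma)\ne 0$ as $\Q^{\bar\sigma}\cap L=0$ (with $T=L^\perp$ forced by conditions (1) and (2) and a dimension count), and link the two by the identity $(\Q^\sigma+T)^\perp=\Q^{\bar\sigma}\cap L$ together with $\dim\Q^\sigma+\dim T=m$. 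This is a genuinely different and arguably cleaner route: it is manifestly symmetric in $\Lambda$ and $\Theta$, so both directions of the equivalence come out in a single stroke, whereas the paper's computation must be run twice. The trade-off is that the paper's argument is entirely elementary block-matrix manipulation, while yours relies on the (standard) orthogonal-complement calculus over $\Q$; both are sound, and your care with the row-versus-column convention for $\Lambda$ addresses the one place where a slip would be easy.
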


\begin{proof}
  Since $K$ is pure, $\Lambda$ is a rational characteristic matrix of $K$ if and only if $\det\Lambda(\sigma)\ne 0$ for each maximal simplex $\sigma\in K$. Now we suppose that $\det\Lambda(\sigma)\ne 0$ and $\det\Theta(\bar{\sigma})=0$. We may assume $\sigma=\{1,\ldots,n\}$. Let
  \[
    \lambda_i=\begin{pmatrix}\lambda_i(1)\\\lambda_i(2)\end{pmatrix}\quad\text{and}\quad\theta_j=\begin{pmatrix}\theta_j(1)\\\theta_j(2)\end{pmatrix}
  \]
  for $\lambda_i(1),\theta_j(1)\in\Z^n$ and $\lambda_i(2),\theta_j(2)\in\Z^{m-n}$. Then
  \[
    \Lambda(\sigma)=(\lambda_1(1),\ldots,\lambda_n(1))\quad\text{and}\quad\Theta(\bar{\sigma})=(\theta_1(2),\ldots,\theta_{m-n}(2)).
  \]
  So since $\det\Theta(\bar{\sigma})=0$, vectors $\theta_1(2),\ldots,\theta_{m-n}(2)$ are linearly dependent over $\Q$, and so there are $c_1,\ldots,c_{m-n}\in\Q$ such that $c_1\theta_1(2)+\cdots+c_{m-n}\theta_{m-n}(2)=0$ and $(c_1,\ldots,c_{m-n})\ne 0$. Let $x=c_1\theta_1(1)+\cdots+c_{m-n}\theta_{m-n}(1)$. Then by the first condition, we have $x\ne 0$. Since $\det\Lambda(\sigma)\ne 0$, $\lambda_1(1),\ldots,\lambda_n(1)$ are linearly independent over $\Q$, so there are $d_1,\ldots,d_n\in\Q$ such that $x=d_1\lambda_1(1)+\cdots+d_n\lambda_n(1)$. Let $y=d_1\lambda_1(2)+\cdots+d_n\lambda_n(2)$. Then we get
  \[
    d_1\lambda_1+\cdots+d_n\lambda_n=\begin{pmatrix}x\\y\end{pmatrix}\quad\text{and}\quad c_1\theta_1+\cdots+c_{m-n}\theta_{m-n}=\begin{pmatrix}x\\0\end{pmatrix}.
  \]
  Since $x\ne 0$, these two vectors are not orthogonal, which contradicts to the second condition, and thus it cannot occur that $\det\Lambda(\sigma)\ne 0$ and $\det\Theta(\bar{\sigma})=0$ simultaneously. Quite similarly, we can show that it cannot occur that $\det\Lambda(\sigma)=0$ and $\det\Theta(\bar{\sigma})\ne 0$ simultaneously too. Thus the proof is complete.
\end{proof}

Every $n\times m$ integer matrix $\Lambda$ defines a homomorphism $T^m\to T^n$, and we denote the identity component of its kernel by $T(\Lambda)$. Then $T(\Lambda)$ is an $(m-n)$-dimensional torus.

\begin{lemma}
  \label{extension}
  If $K$ is pure, then there is a rational characteristic matrix $\Lambda$ of $K$ such that $T\subset T(\Lambda)$.
\end{lemma}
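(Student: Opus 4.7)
The plan is to convert the problem into linear algebra over $\Q$ and produce $\Lambda$ by a genericity argument. Under the standard correspondence between connected subtori of $T^m$ and rational subspaces of $\Q^m$ (via Lie algebras), let $V\subset\Q^m$ be the subspace corresponding to $T$, so $\dim_\Q V = k := \dim T$. For $\tau\subset\{1,\ldots,m\}$, write $V^\tau\subset\Q^m$ for the coordinate subspace spanned by $\{e_i\}_{i\in\tau}$; this is the Lie algebra of the coordinate subtorus $T^\tau\subset T^m$. With respect to the standard inner product, $(V^\tau)^\perp = V^{\bar{\tau}}$.

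First I would translate the freeness of the $T$-action into linear algebra. A point of $Z_K$ has $T^m$-isotropy of the form $T^\tau$ for some $\tau\in K$, and every $\tau\in K$ arises this way. Since $T$ acts freely, $T\cap T^\tau = 1$ for each $\tau\in K$; passing to Lie algebras, $V\cap V^\tau = 0$. For a maximal simplex $\sigma\in K$, purity gives $|\sigma|=n$, hence $\dim(V+V^\sigma)=k+n$. Since $V^\perp\cap V^{\bar{\sigma}}=(V+V^\sigma)^\perp$ has dimension $m-k-n$, the projection
\[
  \pi_\sigma\colon V^\perp\to\Q^\sigma,\qquad (x_1,\ldots,x_m)\mapsto(x_i)_{i\in\sigma}
\]
has $n$-dimensional image, hence is surjective.

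Second, I would use a genericity argument to produce $\lambda_1,\ldots,\lambda_n\in V^\perp\cap\Z^m$ such that $\pi_\sigma(\lambda_1),\ldots,\pi_\sigma(\lambda_n)$ is a $\Q$-basis of $\Q^\sigma$ for every maximal $\sigma\in K$. For each such $\sigma$, the condition $\det(\pi_\sigma(\lambda_1),\ldots,\pi_\sigma(\lambda_n))\neq 0$ cuts out a nonempty Zariski-open subset of $(V^\perp)^n\cong\Q^{n(m-k)}$ by the surjectivity above. Since $K$ has only finitely many maximal simplices and $\Q$ is infinite, the intersection of these opens is nonempty; clearing denominators yields integer vectors. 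Setting $\Lambda:={}^t(\lambda_1,\ldots,\lambda_n)$, the maximal-simplex condition propagates to all simplices (any simplex sits inside a maximal one), so $\Lambda$ is a rational characteristic matrix of $K$. Moreover the rows of $\Lambda$ lie in $V^\perp$, so the homomorphism $T^m\to T^n$ induced by $\Lambda$ vanishes on $T$; since $T$ is connected, $T\subset T(\Lambda)$.

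The main obstacle is the first step: setting up the dictionary between subtori and rational subspaces cleanly enough to extract the correct linear-algebraic consequence of freeness, namely $V\cap V^\sigma=0$ for each maximal $\sigma$. Once this is in place, the surjectivity of $\pi_\sigma$ is a dimension count and the remainder is a routine genericity argument over $\Q$.
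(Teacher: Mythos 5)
Your proof is correct, and it takes a recognizably different route from the paper's. The paper first proves a separate duality statement (Lemma \ref{complement}): for an orthogonal pair of matrices $\Lambda$, $\Theta$ spanning complementary rational subspaces, $\det\Lambda(\sigma)\ne 0$ if and only if $\det\Theta(\bar{\sigma})\ne 0$. It then works entirely on the $\Theta$-side: it takes $\Lambda_1$ to be a $k\times m$ integer matrix whose rows generate $T$ and extends it by a generic rational $(m-n-k)\times m$ block $\Lambda_2$ so that $\det\binom{\Lambda_1}{\Lambda_2}(\bar{\sigma})\ne 0$ for every maximal $\sigma$, the genericity being the same finite-intersection-of-nonempty-Zariski-opens argument you use. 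You instead construct $\Lambda$ directly, choosing generic integer vectors in $V^\perp$ and checking $\det\Lambda(\sigma)\ne 0$ without ever passing through the complementary matrix, so Lemma \ref{complement} is not needed in your version. A genuine advantage of your write-up is that you make explicit where freeness of the $T$-action enters: it gives $V\cap V^\sigma=0$, hence surjectivity of $\pi_\sigma\colon V^\perp\to\Q^\sigma$, hence nonemptiness of each Zariski-open condition. The paper asserts that its open sets $U(\sigma)$ are nonempty without comment; unwinding that assertion requires exactly the fact you isolate, namely that $\Lambda_1(\bar{\sigma})$ has full row rank $k$, which is again $V\cap V^\sigma=0$. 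So the two arguments are dual formulations of the same genericity idea, with yours being the more self-contained and the paper's being organized so that Lemma \ref{complement} carries the linear algebra. One small point worth stating explicitly in your version: the hypothesis of the lemma is only that $K$ is pure, but your first step uses that $T$ acts freely on $Z_K$; this is part of the standing assumptions of the section, so it is available, but it should be cited as the source of $V\cap V^\sigma=0$.
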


\begin{proof}
  Let $\Lambda_1$ be a $k\times m$ integer matrix whose row vectors generate $T$, where $k=\dim T$. By Lemma \ref{complement}, it is sufficient to find an $(m-n-k)\times m$ matrix $\Lambda_2$ with entries in $\Q$ such that
  \[
    \det\begin{pmatrix}\Lambda_1\\\Lambda_2\end{pmatrix}(\bar{\sigma})\ne 0
  \]
  for each maximal simplex $\sigma\in K$. For a maximal simplex $\sigma\in K$, let $U(\sigma)$ be the subspace of $M(m-n-k,m;\Q)$, the space of $(m-n-k)\times m$ rational matrices, consisting of matrices $\Lambda_2$ satisfying the above condition. Then $U(\sigma)$ is non-empty and Zariski open. Thus the intersection of $U(\sigma)$ for all maximal simplices $\sigma\in K$ is non-empty, completing the proof.
\end{proof}

Let $S(\Lambda)=T^m/T(\Lambda)$, so that $S(\Lambda)$ acts on $Z_K/T(\Lambda)$. The second step is to compute the $S(\Lambda)$-equivariant rational cohomology of $Z_K/T(\Lambda)$.

\begin{lemma}
  \label{free action}
  Let $G$ be a topological group acting on a space $X$. If the restriction of the $G$-action to a normal subgroup $H$ is free, then the natural map
  \[
    X\times_GEG\to (X/H)\times_{G/H}E(G/H)
  \]
  is a weak homotopy equivalence.
\end{lemma}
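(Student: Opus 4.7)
The plan is to pass through the intermediate space $Y := (X \times EG)/H$ and exhibit the natural map as a composite of two weak equivalences, each arising from a projection with contractible fiber.

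First, since $H$ acts freely on $X$, the diagonal $H$-action on $X \times EG$ is free, and normality of $H$ in $G$ endows the quotient $Y = (X \times EG)/H$ with a residual $G/H$-action. This action is free: if $[g] \in G/H$ fixes $[x,e]_H \in Y$, then $(gx, ge) = (hx, he)$ for some $h \in H$, and $ge = he$ forces $g = h$ by freeness of the $G$-action on $EG$, so $[g] = e$. Consequently $Y/(G/H) = (X \times EG)/G = X \times_G EG$. Moreover, the $G/H$-equivariant first projection $\rho \colon Y \to X/H$ has contractible fiber $EG$: every orbit $[hx, e]_H$ in the fiber over $[x] \in X/H$ has a unique representative of the form $[x, h^{-1}e]_H$, giving a canonical identification of the fiber with $EG$. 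Assuming $\rho$ is a (quasi-)fibration, it is a weak equivalence.

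Next, applying the Borel construction $(-)\times_{G/H}E(G/H)$ to $\rho$ yields a weak equivalence $Y \times_{G/H} E(G/H) \to (X/H) \times_{G/H} E(G/H)$, as one sees by comparing the Serre fibrations over $B(G/H)$ with $\rho$ as fiber map. On the other hand, since $G/H$ acts freely on $Y$, the natural projection $Y \times_{G/H} E(G/H) \to Y/(G/H) = X \times_G EG$ is a fiber bundle with contractible fiber $E(G/H)$, hence a weak equivalence. Composing these produces a weak equivalence $X \times_G EG \to (X/H) \times_{G/H} E(G/H)$, and a routine diagram chase identifies it with the natural map of the lemma.

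The principal technical obstacle is justifying that the relevant projections (especially $\rho$) are fibrations or at least quasi-fibrations, so that contractibility of the fiber genuinely implies a weak equivalence. For the compact Lie group actions relevant to the subsequent applications in this paper, this is standard and follows from the existence of local slices; in the general statement, one can invoke standard quasi-fibration criteria or place the argument in a model-categorical framework in which Borel constructions preserve equivariant weak equivalences.
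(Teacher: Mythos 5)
Your argument is correct in outline, but it takes a genuinely different route from the paper. The paper works entirely with homotopy fibration sequences: it places the rows $G\to X\to X\times_GEG$ and $G/H\to X/H\to (X/H)\times_{G/H}E(G/H)$ in a commutative ladder, takes homotopy fibers of the vertical maps to obtain a homotopy fibration $H\to H\to F$ with identity first map, and concludes that the homotopy fiber $F$ of the map in question is weakly contractible. You instead interpolate the space $Y=(X\times EG)/H$ and produce a zig-zag $X\times_GEG\leftarrow Y\times_{G/H}E(G/H)\rightarrow (X/H)\times_{G/H}E(G/H)$ of weak equivalences. The two proofs lean on the same point-set input --- your worry about $\rho$ being a (quasi-)fibration is exactly parallel to the paper's implicit need for the homotopy fiber of $X\to X/H$ to be $H$, and both are harmless for the compact Lie group actions used later --- so neither approach is more robust there. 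What your route costs is the final identification: since one leg of your zig-zag points backwards, ``a routine diagram chase'' is not quite enough; you need a homotopy $f\circ q\simeq\rho_*$, which one gets by observing that $Y\times_{G/H}E(G/H)=X\times_G(EG\times E(G/H))$ and that the two resulting classifying maps to $E(G/H)$ are $G/H$-homotopic by the universal property of $E(G/H)$ on free actions. What it buys is transparency about where each hypothesis enters (freeness of $H$ on $X$ for $\rho$, freeness of $G/H$ on $Y$ for the other leg), whereas the paper's fiberwise argument is shorter and dispenses with any comparison of classifying maps.
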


\begin{proof}
  There is a commutative diagram
  \[
    \xymatrix{
      G\ar[r]\ar[d]&X\ar[r]\ar[d]&X\times_GEG\ar[d]\\
      G/H\ar[r]&X/H\ar[r]&(X/H)\times_{G/H}E(G/H)
    }
  \]
  in which each row is a homotopy fibration. Let $F$ be the homotopy fiber of the right vertical map. Then by taking the homotopy fibers of all vertical maps, we get a homotopy fibration $H\to H\to F$, where the first map is the identity map. Thus $F$ is weakly contractible, completing the proof.
\end{proof}

\begin{lemma}
  \label{H iso}
  Let $\Lambda$ be a rational characteristic matrix of $K$ as in Lemma \ref{extension}. Then the natural map
  \[
    H^*_{S(\Lambda)}(Z_K/T(\Lambda);\Q)\to H^*_{T^m}(Z_K;\Q)
  \]
  is an isomorphism.
\end{lemma}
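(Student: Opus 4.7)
The plan is to combine Lemma \ref{free action} with the standard fact that, rationally, an almost free torus action is indistinguishable from a free one. The natural map in the lemma is induced by the $T^m$-equivariant quotient $Z_K\to Z_K/T(\Lambda)$ (where $T^m$ acts on the target through $S(\Lambda)$) together with the canonical $T^m$-equivariant map $ET^m\to ES(\Lambda)$, yielding a space-level map
\[
Z_K\times_{T^m}ET^m\longrightarrow (Z_K/T(\Lambda))\times_{S(\Lambda)}ES(\Lambda),
\]
and the goal is to prove that it induces a rational cohomology equivalence.

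First, I would check that $T(\Lambda)$ acts on $Z_K$ with only finite isotropy subgroups. Since every $T^m$-isotropy is a coordinate subtorus $T^\sigma$ for some $\sigma\in K$, each $T(\Lambda)\cap T^\sigma$ has Lie algebra equal to the $\Q$-linear kernel of $\Lambda$ restricted to the $\sigma$-coordinates, which vanishes because $\Lambda$ is a rational characteristic matrix. Consequently, the canonical $S(\Lambda)$-equivariant projection
\[
q\colon Z_K\times_{T(\Lambda)}ET(\Lambda)\longrightarrow Z_K/T(\Lambda)
\]
has fibers $BT(\Lambda)_x$ with $T(\Lambda)_x$ finite, and these have trivial rational cohomology; a Leray spectral sequence argument then shows that $q$ is a rational cohomology isomorphism.

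Next, I would apply Lemma \ref{free action} with $G=T^m$, $H=T(\Lambda)$, and $X=Z_K\times ET^m$, on which $T(\Lambda)$ acts freely through the second factor. After identifying $(Z_K\times ET^m)\times_{T^m}ET^m\simeq Z_K\times_{T^m}ET^m$ and noting that $ET^m$, being contractible and $T(\Lambda)$-free, serves as a model for $ET(\Lambda)$, the lemma produces an $S(\Lambda)$-equivariant weak equivalence
\[
Z_K\times_{T^m}ET^m\;\simeq\;(Z_K\times_{T(\Lambda)}ET(\Lambda))\times_{S(\Lambda)}ES(\Lambda).
\]
Borel-constructing $q$ over $ES(\Lambda)$ then fits the natural map into a map of fibrations over $BS(\Lambda)$ whose fiber map is $q$; since $S(\Lambda)$ is connected, the local coefficient systems on $BS(\Lambda)$ are trivial, so the Zeeman comparison theorem for the two Serre spectral sequences yields the desired rational isomorphism.

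The step I expect to be the most delicate is this final equivariant upgrade: the non-equivariant $\Q$-cohomology isomorphism $q$ must be promoted to an $S(\Lambda)$-equivariant one after Borel construction. This is a standard consequence of the spectral sequence comparison theorem once $S(\Lambda)$-equivariance of $q$ and triviality of the relevant coefficient systems are carefully verified; the remainder is bookkeeping around Lemma \ref{free action} and the vanishing of $H^*(B\Gamma;\Q)$ for finite $\Gamma$.
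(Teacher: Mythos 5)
Your proof is correct, but it takes a genuinely different route from the paper's. The paper never invokes the general principle that an almost free action of a torus has Borel construction rationally equivalent to its orbit space. Instead, it argues piecewise: each $Z_\sigma$ is $T^m$-equivariantly deformation retracted onto a coordinate subtorus $X$ on which $T(\Lambda)$ acts with a single, constant finite isotropy group $G$; then $T(\Lambda)/G$ acts honestly freely, Lemma \ref{free action} applies directly, and the discrepancy is the homotopy fiber $BG$, which is rationally contractible. The pieces are then assembled by a Mayer--Vietoris induction over the simplices of $K$ (using that the relevant pairs are NDR pairs) and the five lemma. You instead apply Lemma \ref{free action} once, globally, with $X=Z_K\times ET^m$ and $H=T(\Lambda)$, which transfers all the work to the single assertion that $q\colon Z_K\times_{T(\Lambda)}ET(\Lambda)\to Z_K/T(\Lambda)$ is a rational cohomology isomorphism; your verification that every $T(\Lambda)$-isotropy group is finite (from $\det\Lambda(\tau)\ne 0$ for $\tau\in K$) is exactly the computation the paper performs piecewise, and the final Serre spectral sequence comparison over the simply connected base $BS(\Lambda)$ is unproblematic. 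What your route buys is the elimination of the induction over simplices; what it costs is reliance on the Leray spectral sequence of the non-fibration $q$ (equivalently, the slice theorem together with the rational acyclicity of $B\Gamma$ for finite $\Gamma$), which is standard but less elementary and deserves a reference or a short justification --- if one insists on avoiding it, the natural fix is precisely to reintroduce the Mayer--Vietoris induction to reduce to the constant-isotropy case, which is the paper's proof. One small point worth making explicit: your chain of equivalences computes the natural map of the lemma, not merely an abstract isomorphism, because the map $Z_K\times_{T^m}ET^m\to(Z_K/T(\Lambda))\times_{S(\Lambda)}ES(\Lambda)$ visibly factors through $(Z_K\times_{T(\Lambda)}ET^m)\times_{S(\Lambda)}ES(\Lambda)$.
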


\begin{proof}
  First, we prove that the map $Z_\sigma\times_{T^m}ET^m\to (Z_\sigma/T(\Lambda))\times_{S(\Lambda)}ES(\Lambda)$ is an isomorphism in rational cohomology, where $Z_\sigma$ is as in \eqref{definition Z_K}. Let $X=X_1\times\cdots\times X_m$ such that $X_i=0$ for $i\in\sigma$ and $X_i=S^1$ for $i\not\in\sigma$, where we regard $S^1$ is the unit sphere in $\C$. Then $X$ is a $T^m$-equivariant deformation retract of $Z_\sigma$, so that there is a commutative diagram
  \[
    \xymatrix{
      X\times_{T^m}ET^m\ar[r]\ar[d]_\simeq&(X/T(\Lambda))\times_{S(\Lambda)}ES(\Lambda)\ar[d]^\simeq\\
      Z_\sigma\times_{T^m}ET^m\ar[r]&(Z_\sigma/T(\Lambda))\times_{S(\Lambda)}ES(\Lambda).
    }
  \]
  Hence we aim to prove that the top map is an isomorphism in rational cohomology. Since $\Lambda$ is a rational characteristic matrix of $K$, $T(\Lambda)$ acts uniformly on $X$ such that the isotropy subgroup $G$ is finite. Then $T(\Lambda)/G$ acts freely on $X$, and so by Lemma \ref{free action}, the natural map
  \[
    X\times_{T^m/G}E(T^m/G)\to(X/T(\Lambda))\times_{S(\Lambda)}ES(\Lambda)
  \]
  is a weak homotopy equivalence. On the other hand, the homotopy fiber of the map $X\times_{T^m}ET^m\to X\times_{T^m/G}E(T^m/G)$ is homotopy equivalent to $BG$ which is rationally contractible. Then we obtain that the map $Z_\sigma\times_{T^m}ET^m\to(Z_\sigma/T(\Lambda))\times_{S(\Lambda)}ES(\Lambda)$ is an isomorphism in rational cohomology.

  Next, we induct on the number of simplices of $K$, fixing $\Lambda$, where $K$ may have ghost vertices. If $K$ has no simplex and consists only of ghost vertices, then $Z_K=T^m$, and so by Lemma \ref{free action}, the statement holds. Let $\tau<\sigma$ be simplices of $K$. Since $(Z_\sigma,Z_\tau)$ is a $T^m$-equivariant NDR pair,  $(Z_\sigma\times_{T^m}ET^m,Z_\tau\times_{T^m}ET^m)$ is an NDR pair. Quite similarly, we see that $((Z_\sigma/T(\Lambda))\times_{S(\Lambda)}ES(\Lambda),(Z_\tau/T(\Lambda))\times_{S(\Lambda)}ES(\Lambda))$ is an NDR pair too. Then for each maximal simplex $\sigma$ of $K$, we can apply the Mayer-Vietotris sequene to get a commutative diagram
  \[
    \xymatrix{
      \cdots\ar[r]&H^*_{S(\Lambda)}(Z_K/T(\Lambda))\ar[r]\ar[d]&H^*_{S(\Lambda)}(Z_L/T(\Lambda))\oplus H^*_{S(\Lambda)}(Z_{\sigma}/T(\Lambda))\ar[r]\ar[d]&H^*_{S(\Lambda)}(Z_{\partial\sigma}/T(\Lambda))\ar[r]\ar[d]&\cdots\\
      \cdots\ar[r]&H^*_{T^m}(Z_K)\ar[r]&H^*_{T^m}(Z_L)\oplus H^*_{T^m}(Z_{\sigma})\ar[r]&H^*_{T^m}(Z_{\partial\sigma})\ar[r]&\cdots
    }
  \]
  with exact rows, where $L=K-\sigma$, we consider ghost vertices for $Z_\sigma$ and $Z_{\partial\sigma}$, and we omit the coefficient $\Q$. Thus by the five lemma, the induction proceeds, and the proof is complete.
\end{proof}

The final step is to prove the vanishing in the equivariant cohomology.

\begin{lemma}
  \label{vanishing}
  Let $U$ be any subtorus of $T^m$. Then for $*>m+n-\dim U$,
  \[
    H^*(Z_K/U;\Q)=0.
  \]
\end{lemma}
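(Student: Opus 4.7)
The plan is to prove the vanishing by induction on the number of simplices of $K$, combining a Mayer--Vietoris decomposition with a strong dimension bound on each orbit space $Z_\sigma/U$. The geometric input is that each piece $Z_\sigma$ radially retracts onto a subtorus of $T^m$, and a subtorus action on a torus has a torus as orbit space, giving easy cohomological control.

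\textbf{Dimension bound on pieces.} For any simplex $\sigma\in K$ with $|\sigma|=k$, I would show the stronger statement $H^j(Z_\sigma/U;\Q)=0$ for all $j>m-\dim U$. The map $h_t\colon Z_\sigma\to Z_\sigma$ defined by $z_i\mapsto(1-t)z_i$ for $i\in\sigma$ and fixing coordinates $i\notin\sigma$ is a $T^m$-equivariant, hence $U$-equivariant, deformation retraction of $Z_\sigma=\prod_{i\in\sigma}D^2\times\prod_{i\notin\sigma}S^1$ onto $X_\sigma:=\prod_{i\in\sigma}\{0\}\times\prod_{i\notin\sigma}S^1$, and descends to a homotopy equivalence $Z_\sigma/U\simeq X_\sigma/U$. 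The coordinate subtorus $T^\sigma\subset T^m$ (with circles at positions $i\in\sigma$) acts trivially on $X_\sigma$, and $T^m/T^\sigma\cong T^{m-k}$ acts freely and transitively on $X_\sigma$ by translation. Consequently the $U$-action on $X_\sigma$ factors through the subtorus $U/(U\cap T^\sigma)\hookrightarrow T^{m-k}$ acting by translation, and $X_\sigma/U$ is itself a torus of dimension
\[
  (m-k)-\bigl(\dim U-\dim(U\cap T^\sigma)\bigr)\le m-\dim U,
\]
using $\dim(U\cap T^\sigma)\le k$. The desired vanishing for $Z_\sigma/U$ follows.

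\textbf{Mayer--Vietoris induction.} I would induct on the number of simplices of $K$. The base case is $K=\{\emptyset\}$ (so $n=0$), where $Z_K=T^m$ and the previous step gives the bound. For the induction step, pick a maximal simplex $\sigma$ with $|\sigma|=k\le n$, set $L=K-\sigma$, and decompose $Z_K=Z_L\cup Z_\sigma$ with $Z_L\cap Z_\sigma=Z_{\partial\sigma}$. Since $(Z_\sigma,Z_{\partial\sigma})$ is a $T^m$-equivariant NDR pair (as recorded in the proof of Lemma~\ref{H iso}), its $U$-orbit quotient $(Z_\sigma/U,Z_{\partial\sigma}/U)$ is an NDR pair, and Mayer--Vietoris gives
\[
  \cdots\to H^{j-1}(Z_{\partial\sigma}/U;\Q)\to H^j(Z_K/U;\Q)\to H^j(Z_L/U;\Q)\oplus H^j(Z_\sigma/U;\Q)\to H^j(Z_{\partial\sigma}/U;\Q)\to\cdots.
\]
For $j>m+n-\dim U$: the term $H^j(Z_\sigma/U;\Q)$ vanishes by the previous step; the term $H^j(Z_L/U;\Q)$ vanishes by the inductive hypothesis applied to $L$ (which has dimension at most $n-1$); and both $H^{j-1}(Z_{\partial\sigma}/U;\Q)$ and $H^j(Z_{\partial\sigma}/U;\Q)$ vanish by the inductive hypothesis applied to $\partial\sigma$ (whose top dimension is $k-2\le n-2$, giving vanishing already above $m+(n-1)-\dim U$, comfortably below $j-1$). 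Exactness then forces $H^j(Z_K/U;\Q)=0$.

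I do not anticipate a substantial obstacle. The two points demanding mild care are that the radial contraction is indeed $U$-equivariant (clear, since it only scales the $D^2$-factors, which commutes with the $S^1$-rotations) and that the NDR pair structure descends to orbit quotients (standard for compact group actions on NDR pairs). Notably, freeness of $U$ on $Z_K$ is never used, which is consistent with the generality of the hypothesis and with the fact that $Z_K/U$ need not be a manifold.
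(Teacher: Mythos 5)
Your proof is correct and follows essentially the same route as the paper's: induction on the number of simplices of $K$, with the key input being that $Z_\sigma/U$ equivariantly deformation retracts onto $X_\sigma/U$, a torus of dimension at most $m-\dim U$ by the same kernel/image dimension count. The only cosmetic difference is that you run a Mayer--Vietoris sequence where the paper uses the long exact sequence of the pair $(Z_K/U,Z_L/U)$ together with excision to reduce to $(Z_\sigma/U,Z_{\partial\sigma}/U)$; the two carry the same information.
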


\begin{proof}
We prove the lemma by induction on the number of simplices of $K$, where $K$ may have ghost vertices. First, if $K$ has no simplex, then $n=0$ and $Z_K/U=(S^1)^m/U$ is homeomorphic with $T^{m-\dim U}$, so the statement holds. Next, suppose that $K$ has a simplex, and let $\sigma\in K$ be a maximal simplex. Let $L=K\setminus\{\sigma\}$. Consider the long exact sequence
\[
  \cdots\to H^*(Z_K/U,Z_L/U;\Q)\to H^*(Z_K/U;\Q)\to H^*(Z_L/U;\Q)\to\cdots.
\]
By the induction hypothesis, $H^*(Z_L/U;\Q)=0$ for $*>m+n-\dim U$. Then we aim to show that $H^*(Z_K/U,Z_L/U;\Q)=0$ for $*>m+n-\dim U$. Since $(Z_K,Z_L)$ is a $T^m$-equivariant NDR pair, $(Z_K/U,Z_L/U)$ is an NDR pair. Then, by the excision property, there is an isomorphism
\[
  H^*(Z_K/U,Z_L/U;\Q)\cong H^*(Z_\sigma/U,Z_{\partial\sigma}/U;\Q),
\]
where $Z_\sigma$ is as in \eqref{definition Z_K}. Then we show $H^*(Z_\sigma/U,Z_{\partial\sigma}/U;\Q)=0$ for $*>m+n-\dim U$.

Let $X$ be as in the proof of Lemma \ref{H iso}. Then $Z_\sigma/U\simeq X/U$. Since $T^m$ acts on $X\cong (S^1)^{m-|\sigma|}$ through the projection $\rho\colon T^m\to T_\sigma$, where $T_\sigma=\{(z_1,\ldots,z_m)\in T^m\mid z_i=1\text{ for }i\not\in\sigma\}$. Then there is a homeomorphism
\[
  X/U\cong\Coker\rho|_U.
\]
Since $\Ker\rho|_U\subseteq\Ker\rho$, we have $\dim\Ker\rho|_U\le \dim\Ker\rho=|\sigma|$, implying $\dim\rho(U)\ge \dim U-|\sigma|$. Thus we see that $X/U$ is homeomorphic with a torus of dimension $\leq (m-|\sigma|)-(\dim U-|\sigma|)=m-\dim U$. In particular, we see that $H^*(Z_\sigma/U;\Q)=0$ for $*>m-\dim U$. Now we consider the exact sequence
\[
  \cdots\to H^*(Z_\sigma/U,Z_{\partial\sigma}/U;\Q)\to H^*(Z_\sigma/U;\Q)\to H^*(Z_{\partial\sigma}/U;\Q) \to\cdots.
\]
By the induction hypothesis, $H^*(Z_{\partial\sigma}/U;\Q)=0$ for $*>m+n-1-\dim U$. Thus we obtain $H^*(Z_\sigma/U,Z_{\partial\sigma}/U;\Q)=0$ for $*>m+n-\dim U$, completing the proof.
\end{proof}

Let $d=m+n-\dim T$ and $S=T^m/T$. Then $S$ acts on $Z_K/T$.

\begin{proposition}
  \label{H^d}
  If $\dim T<m-n$ and $Z_K$ is a topological manifold, then the natural map
  \[
    H^d_{S}(Z_K/T)\to H^d(Z_K/T)
  \]
  is trivial.
\end{proposition}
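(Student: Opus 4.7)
The plan is to factor the map $H^d_S(Z_K/T;\Q)\to H^d(Z_K/T;\Q)$ through the equivariant cohomology with respect to an intermediate subgroup $T(\Lambda)/T\subset S$, and then force the intermediate group to vanish using Lemma~\ref{vanishing}. By Lemmas~\ref{pure} and~\ref{extension}, pick a rational characteristic matrix $\Lambda$ of $K$ with $T\subset T(\Lambda)$; the hypothesis $\dim T<m-n$ is exactly the statement that $T(\Lambda)/T$ is a positive-dimensional torus.

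Using that every short exact sequence of tori splits, fix a splitting $S\cong (T(\Lambda)/T)\oplus S(\Lambda)$. Then the Borel fibration of $S$ acting on $Z_K/T$ breaks as a composition of two fibrations
\[
(Z_K/T)\times_{T(\Lambda)/T}E(T(\Lambda)/T)\to (Z_K/T)\times_S ES\to BS(\Lambda),
\]
through which the fiber inclusion $Z_K/T\hookrightarrow (Z_K/T)\times_S ES$ factors. Consequently, the map in the statement factors on cohomology as
\[
H^d_S(Z_K/T;\Q)\to H^d_{T(\Lambda)/T}(Z_K/T;\Q)\to H^d(Z_K/T;\Q),
\]
so it suffices to show $H^d_{T(\Lambda)/T}(Z_K/T;\Q)=0$.

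For this, I would first verify that $T(\Lambda)/T$ acts on $Z_K/T$ with finite isotropy: for $x\in Z_K$ whose $T^m$-stabilizer is $T_\sigma$, a short computation using $T\subset T(\Lambda)$ and $T\cap T_\sigma=1$ identifies the stabilizer of $[x]\in Z_K/T$ in $T(\Lambda)/T$ with $T_\sigma\cap T(\Lambda)$, which is finite because $\Lambda$ is rational characteristic. Then, by the nested Borel construction argument used in the proof of Lemma~\ref{H iso} (split off the free quotient by the finite isotropy kernel and use that $BG$ is rationally acyclic for $G$ finite), the natural map $(Z_K/T)\times_{T(\Lambda)/T}E(T(\Lambda)/T)\to Z_K/T(\Lambda)$ is an isomorphism in rational cohomology. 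Finally, Lemma~\ref{vanishing} applied to $U=T(\Lambda)$, of dimension $m-n$, gives $H^*(Z_K/T(\Lambda);\Q)=0$ for $*>2n$; since $\dim T<m-n$ translates to $d=m+n-\dim T>2n$, the desired vanishing $H^d_{T(\Lambda)/T}(Z_K/T;\Q)=0$ follows, and the proposition is proved.

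The main technical obstacle is the rational isomorphism $H^*_{T(\Lambda)/T}(Z_K/T;\Q)\cong H^*(Z_K/T(\Lambda);\Q)$: transferring the almost-free action analysis of Lemma~\ref{H iso}, originally carried out face by face for $T(\Lambda)$ acting on $Z_K$, to the setting of $T(\Lambda)/T$ acting on $Z_K/T$. One likely proceeds again by induction on the number of simplices, with an analogous Mayer-Vietoris step comparing the Borel construction of each $Z_\sigma/T$ with its quotient by $T(\Lambda)/T$. Once that ingredient is in place, assembling the factorization and invoking the dimensional vanishing is routine.
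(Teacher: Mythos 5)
Your proposal is correct in outline, but it follows a genuinely different factorization from the paper's, and it requires one extra technical ingredient that the paper manages to avoid. The paper's proof is a short diagram chase: it forms the commutative square whose rows are the fiber-inclusion maps $H^d_{S(\Lambda)}(Z_K/T(\Lambda);\Q)\to H^d(Z_K/T(\Lambda);\Q)$ and $H^d_{S}(Z_K/T;\Q)\to H^d(Z_K/T;\Q)$, observes that the left vertical map is an isomorphism by Lemmas \ref{free action} and \ref{H iso} (both equivariant groups being identified with $H^d_{T^m}(Z_K;\Q)$), and then kills the whole square because the top-right corner $H^d(Z_K/T(\Lambda);\Q)$ vanishes by Lemma \ref{vanishing} — the same dimension count $d=m+n-\dim T>2n$ that you use. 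You instead restrict from $S$ to the subgroup $T(\Lambda)/T$, factoring the map as $H^d_S(Z_K/T;\Q)\to H^d_{T(\Lambda)/T}(Z_K/T;\Q)\to H^d(Z_K/T;\Q)$, and aim to show the middle group itself vanishes; for that you need the rational isomorphism $H^*_{T(\Lambda)/T}(Z_K/T;\Q)\cong H^*(Z_K/T(\Lambda);\Q)$ (equivalently, $H^*_{T(\Lambda)}(Z_K;\Q)\cong H^*(Z_K/T(\Lambda);\Q)$ for the almost free $T(\Lambda)$-action), which is \emph{not} literally Lemma \ref{H iso} and would have to be proved by repeating its face-by-face Mayer--Vietoris induction. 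You correctly flag this as the main obstacle and your sketch of it is viable, so your argument would go through; what the paper's square buys is precisely that it only ever needs the ordinary cohomology of $Z_K/T(\Lambda)$ to vanish, so Lemma \ref{H iso} can be cited as a black box and no new induction is required. (Minor remark: the splitting $S\cong (T(\Lambda)/T)\times S(\Lambda)$ is not needed for your factorization — the restriction map to any subgroup's Borel construction always factors the fiber inclusion.)
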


\begin{proof}
  There is a commutative diagram
  \[
    \xymatrix{
      H^d_{S(\Lambda)}(Z_K/T(\Lambda);\Q)\ar[r]\ar[d]&H^d(Z_K/T(\Lambda);\Q)\ar[d]\\
      H^d_{S}(Z_K/T;\Q)\ar[r]&H^d(Z_K/T;\Q).
    }
  \]
  By Lemmas \ref{free action} and \ref{H iso}, the left vertical map is an isomorphism. By Lemma \ref{vanishing}, we also have $H^d(Z_K/T(\Lambda);\Q)=0$. Thus we obtain that the bottom map is trivial, completing the proof.
\end{proof}

Now we are ready to prove Theorem \ref{SW number}.

\begin{proof}
  [Proof of Theorem \ref{SW number}]
  By assumption, $Z_K/T$ is a smooth manifold, so that we need to show all of its Stiefel numbers vanish. Let $j\colon Z_K/T\to ES\times_S(Z_K/T)$ denote the inclusion. By Proposition \ref{SW}, $j^*(w_i^S(Z_K/T))=w_i(Z_K/T)$. Then by Proposition \ref{H^d},
  \[
    w_{i_1}(Z_K/T)\cdots w_{i_k}(Z_K/T)=j^*(w_{i_1}^S(Z_K/T)\cdots w_{i_k}^S(Z_K/T))=0
  \]
  for $i_1+\cdots+i_k=d$, completing the proof.
\end{proof}

As mentioned above, Corollary \ref{null cobordant} makes sense only when $T$ is a maximal subtorus of $T^m$ acting freely on $Z_K$. We give an example of such a maximal subtorus $T$. For $t_1<\cdots<t_m$, let $P_i=(t_i,t_i^2,\ldots,t_i^n)\in\R^n$. Recall that the \emph{cyclic polytope} $C_n(m)$ is defined by the convex hull of points $P_1,\ldots,P_m$ in $\R^n$. It is well known that the combinatorial type of $C_n(m)$ is independent from the choice of $t_1<\cdots<t_m$. The cyclic polytope $C_n(m)$ is a simplicial $n$-dimensional polytope with $m$ vertices \cite[Theorem 0.7]{Z}, and so $Z_{\partial C_n(m)}$ is a smooth manifold such that the action of $T^m$ is smooth \cite[Lemma 3.1.2]{BP}. Hence if a subtorus $T$ of $T^m$ acts freely on $Z_{\partial C_n(m)}$, then $Z_{\partial C_n(m)}/T$ is a smooth manifold.

  \begin{proposition}
    \label{cyclic polytope}
    There is a 2-dimensional subtorus $T$ of $T^9$ which is maximal among subtori acting freely on $Z_{\partial C_6(9)}$.
  \end{proposition}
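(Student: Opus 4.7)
The plan is to exhibit an explicit 2-dimensional subtorus $T$ of $T^9$ that acts freely on $Z_{\partial C_6(9)}$ and to prove that no 3-dimensional subtorus of $T^9$ properly containing $T$ acts freely. By Gale's evenness criterion, the facets of $C_6(9)$ correspond bijectively to the 3-subsets $\bar\sigma=\{i_1<i_2<i_3\}$ of $\{1,\dots,9\}$ with $i_2-i_1$ and $i_3-i_2$ both odd; equivalently $i_1,i_2,i_3$ alternate in parity. In particular, every pair of distinct same-parity indices in $\{1,\dots,9\}$ arises as $(i_1,i_3)$ for some such $\bar\sigma$.

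Take $T$ to be the image of the homomorphism $T^2\to T^9$ defined by the $9\times 2$ integer matrix $A=(a_1,a_2)$ with $a_1=(1,1,1,1,1,1,1,1,1)^{T}$ and $a_2=(0,1,0,1,0,1,0,1,0)^{T}$. The $2\times 2$ minor of $A$ on rows $1,2$ equals $1$, so $A$ is primitive and $T\cong T^2$. For each facet $\sigma$, the $3\times 2$ submatrix $A[\bar\sigma]$ has a $2\times 2$ minor equal to $\pm 1$, namely the one built from the unique row of opposite parity together with either of the other two rows, so $A[\bar\sigma]$ is primitive. Equivalently, $T\cap T_\sigma=\{1\}$ for every facet $\sigma$ of $\partial C_6(9)$, which proves that $T$ acts freely on $Z_{\partial C_6(9)}$.

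For maximality, suppose for contradiction that a 3-dimensional subtorus $T'$ of $T^9$ containing $T$ acts freely. Since $L=\Z a_1+\Z a_2$ is primitive in $\Z^9$, the quotient $L'/L$ of the coweight lattice of $T'$ is torsion-free of rank one, so $L'$ admits a $\Z$-basis of the form $(a_1,a_2,a_3)$ for some $a_3\in\Z^9$, and freeness of $T'$ forces $\det A'[\bar\sigma]=\pm 1$ for every facet $\sigma$, where $A'=(a_1,a_2,a_3)$. The row operation $R_3\mapsto R_3-R_1$ kills the first two entries of the bottom row of $A'[\bar\sigma]$, since $a_1$ is constant and $a_2$ agrees on the same-parity indices $i_1$ and $i_3$; cofactor expansion then yields
\[
  |\det A'[\bar\sigma]|=|a_3(i_3)-a_3(i_1)|.
\]
Hence freeness of $T'$ would require $|a_3(i)-a_3(j)|=1$ for every pair of distinct same-parity indices in $\{1,\dots,9\}$. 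Applied to the five odd positions $\{1,3,5,7,9\}$ this produces five integers pairwise differing by $\pm 1$, which is impossible because any three distinct integers $a,b,c$ with $|a-b|=|b-c|=1$ must satisfy $|a-c|\in\{0,2\}$. The key computational step is this collapse of $|\det A'[\bar\sigma]|$ to the single difference $|a_3(i_3)-a_3(i_1)|$; the rest is the parity combinatorics built into Gale's criterion.
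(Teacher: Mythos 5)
Your proof is correct, and it splits into a half that matches the paper and a half that is genuinely different. For existence you do essentially what the paper does: you write down an explicit parity-based integer matrix defining $T$ (the paper uses the matrix with rows $(1,0,1,0,1,0,1,0,1)$ and $(0,1,0,1,0,1,0,1,1)$, so your subtorus is literally a different one, but the mechanism is identical) and use Gale's evenness condition to exhibit a unimodular $2\times 2$ minor in each facet-complement submatrix, forcing $T\cap T_\sigma=1$ for every facet $\sigma$. For maximality the paper simply cites the classification result of \cite{H}, which gives the stronger statement that \emph{no} $3$-dimensional subtorus of $T^9$ acts freely on $Z_{\partial C_6(9)}$; you instead give a short self-contained argument that no $3$-dimensional subtorus containing your particular $T$ acts freely, by extending a basis of the primitive lattice of $T$ by a vector $a_3$, collapsing $|\det A'[\bar\sigma]|$ to $|a_3(i_3)-a_3(i_1)|$ by a row reduction, and deriving a contradiction from the requirement that the five values of $a_3$ at the odd indices differ pairwise by $1$. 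Your version buys independence from \cite{H} at the price of proving only maximality of this specific $T$ rather than nonexistence of any freely acting $3$-torus, which is all the proposition asks. Two points deserve an explicit sentence: a freely acting subtorus strictly containing $T$ could a priori have dimension greater than $3$, but it then contains a $3$-dimensional subtorus containing $T$ (pull back a circle in $T'/T$), and subtori of freely acting tori act freely, so ruling out dimension $3$ suffices; and the equivalence of freeness over the facet $\sigma$ with $\det A'[\bar\sigma]=\pm1$ uses that a generic point of $D(\sigma)$ has isotropy exactly the coordinate subtorus $T_\sigma$ and that $A'$ is injective because its image lattice is primitive.
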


  \begin{proof}
    It is shown in \cite{H} that there is no 3-dimensional subtorus of $T^9$ acting freely on $Z_{\partial C_6(9)}$. So we only need to give a 2-dimensional subtorus of $T^9$ acting freely on $Z_{\partial C_6(9)}$.

    Let $T$ be the image of a map $T^2\to T^9$ defined by a matrix:
    \begin{equation}
      \label{matrix}
      \begin{pmatrix}
        1&0&1&0&1&0&1&0&1\\
        0&1&0&1&0&1&0&1&1
      \end{pmatrix}
    \end{equation}
    To see that $T$ acts freely on $Z_{\partial C_6(9)}$, it is sufficient to show that $T$ acts freely on
    \[
      D(\sigma)=\{(z_1,\ldots,z_9)\in Z_{\partial C_6(9)}\mid z_i=0\text{ for }P_i\in\sigma\}
    \]
    for each facet $\sigma$ of $\partial C_6(9)$. Let $\{P_a,P_b,P_c\}=\{P_1,\ldots,P_9\}-\sigma$ for a facet $\sigma$ of $\partial C_6(9)$, where $a<b<c$. By Gale's evenness condition \cite[Theorem 0.7]{Z}, $b-a$ must be odd, so that we can easily verify $\det(\rho_a,\rho_b)=\pm 1$, where $\rho_i$ denotes the $i$-th column of the matrix \eqref{matrix}. Then if we remove column vectors corresponding to vertices of a facet $\sigma$ from the matrix \eqref{matrix}, then we get a $2\times 3$ matrix of rank $2$. This implies that $T$ acts freely on $D(\sigma)$, completing the proof.
  \end{proof}

  By Proposition \ref{cyclic polytope}, we see that Corollary \ref{null cobordant} is not trivial. Now we can further ask whether or not the Stiefel-Whitney classes of $Z_K/T$ are trivial, as well as Question \ref{question 1}. Here is the answer to this question.

  \begin{proposition}
    Let $T$ be the 2-dimensional subtorus of $T^9$ in Proposition \ref{cyclic polytope}. Then
    \[
      w_2(Z_{\partial C_6(9)}/T)\ne 0.
    \]
  \end{proposition}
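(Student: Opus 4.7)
The plan is to compute $w_2(Z_K/T)$ via the equivariant Stiefel--Whitney class and Proposition~\ref{SW}. Since $K=\partial C_6(9)$ is polytopal, $Z_K$ is a smooth $T^9$-manifold, realized as a $T^9$-invariant transverse intersection of $3$ real-valued level sets in $\C^9$ with equivariantly trivial normal bundle. Combined with $T\C^9=\bigoplus_{i=1}^9 L_i$, where $L_i$ carries weight $e_i$, this identifies the Borel-constructed tangent bundle of $Z_K$ on $DJ_K$ with total Stiefel--Whitney class $\prod_{i=1}^9(1+v_i)$. Because the principal $T$-bundle $\pi\colon Z_K\to Z_K/T$ has $T^9$-equivariantly trivial vertical tangent bundle, the splitting $TZ_K=\pi^*T(Z_K/T)\oplus V$ yields the same formula for $Z_K/T$:
\[
w^{T^9/T}(Z_K/T)=\prod_{i=1}^{9}(1+v_i)
\]
in $H^*_{T^9/T}(Z_K/T;\Z/2)\cong H^*(DJ_K;\Z/2)=\Z/2[v_1,\ldots,v_9]/I_K$, the isomorphism coming from Lemma~\ref{free action}. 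In particular $w_2^{T^9/T}(Z_K/T)=v_1+\cdots+v_9$, and so by Proposition~\ref{SW}, $w_2(Z_K/T)=j^*(v_1+\cdots+v_9)$, where $j\colon Z_K/T\to DJ_K$ is the fiber inclusion of the fibration $Z_K/T\to DJ_K\to B(T^9/T)$.

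To show this is nonzero, I would analyze $\ker j^*$ in degree $2$ via the Serre spectral sequence of this fibration. Since $Z_K$ is simply connected (a standard property of moment-angle manifolds of connected simplicial complexes), the long exact sequence for $\pi$ gives $\pi_1(Z_K/T)=0$ and hence $H^1(Z_K/T;\Z/2)=0$; combined with $H^3(B(T^9/T);\Z/2)=0$, one obtains a short exact sequence
\[
0\to H^2(B(T^9/T);\Z/2)\xrightarrow{\pi^*}H^2(DJ_K;\Z/2)\xrightarrow{j^*}H^2(Z_K/T;\Z/2)\to 0.
\]
Thus $\ker j^*$ in degree $2$ equals $\operatorname{im}\pi^*$. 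The composition $DJ_K\to BT^9\to B(T^9/T)$ identifies this image in $H^2(DJ_K;\Z/2)=\Z/2\{v_1,\ldots,v_9\}$ with the subspace $\{\sum c_iv_i:(c_1,\ldots,c_9)\in\ker(A\bmod 2)\}$, where $A$ is the $2\times 9$ matrix in \eqref{matrix}.

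Finally, a direct computation gives
\[
A\cdot(1,1,\ldots,1)^T\equiv(1,1)^T\pmod{2},
\]
so $(1,\ldots,1)\notin\ker(A\bmod 2)$, hence $v_1+\cdots+v_9\notin\operatorname{im}\pi^*$, which proves $w_2(Z_K/T)\ne 0$.

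The main substantive step is the identification $w^{T^9/T}(Z_K/T)=\prod_{i=1}^9(1+v_i)$; this rests on the explicit complete-intersection description of $Z_K\subset\C^9$ and on the agreement of Fadell's equivariant Stiefel--Whitney class with the smooth one (an equivariant refinement of \cite[Proposition~3.17]{F}). The remainder is the short-exact-sequence argument above and a routine mod-$2$ matrix calculation.
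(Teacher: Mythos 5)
Your argument is in substance the same as the paper's: both rest on the Davis--Januszkiewicz-type stable splitting of the tangent bundle of $Z_{\partial C_6(9)}/T$ into the quotient line bundles $L_i/T$, giving $w_2=j^*(v_1+\cdots+v_9)$, and both detect non-vanishing by computing $\ker j^*$ in degree $2$ from the fibration $Z_{\partial C_6(9)}/T\to DJ_{\partial C_6(9)}\to B(T^9/T)$ together with a mod $2$ linear-algebra check. Your criterion $A\cdot(1,\ldots,1)^T\equiv(1,1)^T\pmod 2$ is dual to the paper's presentation $H^2(Z_{\partial C_6(9)}/T;\Z)=\Z\langle v_1,\ldots,v_9\rangle/\Z\langle\theta_1,\ldots,\theta_7\rangle$ (the row space of the $7\times 9$ matrix mod $2$ equals $\ker(A\bmod 2)$), and both yield $w_2=[v_1]+[v_2]\ne 0$; note also that you only need the easy containment $\operatorname{im}\pi^*\subseteq\ker(A\bmod 2)$, not equality. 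The one place where you outrun the available machinery is the detour through the equivariant class: you compute $w^{T^9/T}(Z_{\partial C_6(9)}/T)$ as the Stiefel--Whitney class of the Borel construction of a smooth tangent bundle and then invoke Proposition~\ref{SW}, but the equivariant classes of Proposition~\ref{SW} are defined via the equivariant Thom class of Fadell's tangent fiber space, and neither the paper nor \cite{F} identifies these with the Borel-construction classes of a smooth equivariant bundle --- you flag this as an ``equivariant refinement of \cite[Proposition 3.17]{F}'' but do not supply it. The detour is avoidable: since $Z_{\partial C_6(9)}/T$ is smooth, the same line-bundle splitting applied non-equivariantly on the quotient itself (this is what the paper means by ``quite similarly to \cite[Corollary 6.7]{DJ}'') gives $w(Z_{\partial C_6(9)}/T)=\prod_i(1+[v_i])$ directly, after which your kernel computation completes the proof.
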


  \begin{proof}
    By definition, the 2-dimensional subtorus $T$ of $T^9$ is the kernel of a map $T^9\to T^7$ defined by a matrix:
    \begin{equation}
      \label{matrix 2}
      \begin{pmatrix}
        -1&0&1&0&0&0&0&0&0\\
        0&-1&0&1&0&0&0&0&0\\
        -1&0&0&0&1&0&0&0&0\\
        0&-1&0&0&0&1&0&0&0\\
        -1&0&0&0&0&0&1&0&0\\
        0&-1&0&0&0&0&0&1&0\\
        -1&-1&0&0&0&0&0&0&1
      \end{pmatrix}
    \end{equation}
    By \cite[Theorem 3.7]{MT}, $T^9\cong T\times T^9/T$, and therefore we can regard $ET^9=ET\times E(T^9/T)$. Thus we see the following immediately.
    \[
      Z_{\partial C_6(9)}/T\times_{T^9/T}E(T^9/T)\simeq Z_{\partial C_6(9)}\times_{T^9}ET^9=DJ_{\partial C_6(9).}
    \]
    Then there is a homotopy commutative diagram
    \[
      \xymatrix{
        T\ar[r]\ar[d]&\ast\ar[r]\ar[d]&(\C P^\infty)^2\ar[d]\\
        Z_{\partial C_6(9)}\ar[r]\ar[d]&DJ_{\partial C_6(9)}\ar[r]^\alpha\ar@{=}[d]&(\C P^\infty)^9\ar[d]^\lambda\\
        Z_{\partial C_6(9)}/T\ar[r]&DJ_{\partial C_6(9)}\ar[r]&(\C P^\infty)^7
      }
    \]
    where all columns and rows are homotopy fibrations and the map $\lambda$ is defined by a matrix \eqref{matrix 2}. Consider the Serre spectral sequence for the middle row homotopy fibration. Then since $Z_{\partial C_6(9)}$ is simply-connected \cite[Corollary 3.3.4]{BP}, the map $\alpha$ is an isomorphism in $H^2$. By the left fibration, we can also see that $Z_{\partial C_6(9)}/T$ is simply-connected. Then by applying the Serre exact sequence to the bottom fibration, we get
    \[
      H^2(Z_{\partial C_6(9)}/T;\Z)=\Z\langle v_1,\ldots,v_9\rangle/\Z\langle\theta_1,\ldots,\theta_7\rangle,\quad\theta_i=\lambda_{i1}v_1+\cdots+\lambda_{i9}v_9
    \]
    where $\lambda_{ij}$ denotes the $(i,j)$-entry of a matrix \eqref{matrix 2} and $\Z\langle a_1,\ldots,a_n\rangle$ means a free abelian group spanned by $a_1,\ldots,a_n$. On the other hand, quite similarly to \cite[Corollary 6.7]{DJ}, we can see that
    \[
      w_2(Z_{\partial C_6(9)}/T)=[v_1+\cdots+v_9]\in H^2(Z_{\partial C_6(9)}/T;\Z/2).
    \]
    Thus we obtain that $H^2(Z_{\partial C_6(9)}/T;\Z/2)$ has a basis $[v_1],[v_2]$ and $w_2(Z_{\partial C_6(9)}/T)=[v_1]+[v_2]$, implying $w_2(Z_{\partial C_6(9)}/T)\ne 0$. Therefore the proof is complete.
  \end{proof}

\end{document}